\documentclass[11pt,fleqn]{amsart}

\usepackage{epsfig}
\usepackage[leqno]{amsmath}
\usepackage{amssymb}
\usepackage{amscd}
\usepackage{latexsym}
\usepackage{tabularx}
\usepackage{a4wide}
\usepackage{color}
\usepackage{enumerate}
\usepackage{subfigure}

\usepackage{url}

\usepackage{url}

\usepackage[matrix,arrow,tips,curve]{xy}
\usepackage{pb-diagram,pb-xy}
\usepackage{verbatim}
\usepackage{mathrsfs}
\usepackage{color}

\input{xy}
\xyoption{all}

\small\normalsize

\newtheorem{thm}{Theorem}[section]
\newtheorem{lemma}[thm]{Lemma}

\newtheorem{prop}[thm]{Proposition}

\newtheorem{claim}[thm]{Claim}
\newtheorem*{claim*}{Claim}
\newtheorem{cor}[thm]{Corollary}

\newtheorem{remark}[thm]{Remark}

\newtheorem{defi}[thm]{Definition}

\newcommand{\Z}{\mathbb{Z}}
\newcommand{\ZZ}{\mathbb{Z}}
\newcommand{\R}{\mathbb{R}}
\newcommand{\RR}{\mathbb{R}}

\newcommand{\CC}{\mathbb{C}}

\newcommand{\ord}{\mathrm{ord}}

\newcommand{\Div}{\operatorname{Div}}

\renewcommand{\div}{\mathrm{div}}


\DeclareMathOperator{\Zh}{Zh}
\DeclareMathOperator{\supp}{supp}
\DeclareMathOperator{\Rat}{Rat}

\DeclareMathOperator\val{val}

\title{A spectral lower bound for the divisorial gonality of metric graphs}
\author{Omid Amini}
\address{CNRS - DMA, \'Ecole Normale Sup\'erieure, Paris}
 \email{oamini@math.ens.fr}
 \author{Janne Kool}
 \address{Max Planck Institut f\"ur Mathematik, Bonn}
\email{kool79@mpim-bonn.mpg.de}

\begin{document}
\maketitle

\begin{abstract}
Let $\Gamma$ be a compact metric graph, and denote by $\Delta$ the Laplace operator on $\Gamma$ with the first 
non-trivial eigenvalue $\lambda_1$. We prove the following Yang-Li-Yau type 
inequality on divisorial gonality $\gamma_{\div}$ of $\Gamma$. There is a universal (explicit) constant $C$ such that 
\[\gamma_\div(\Gamma) \geq C \frac{\mu(\Gamma) . \ell_{\min}^{\mathrm{geo}}(\Gamma). \lambda_1(\Gamma)}{d_{\max}},\]
where the volume $\mu(\Gamma)$ is the total length of the edges in $\Gamma$, $\ell_{\min}^{\mathrm{geo}}$ is the non-zero minimum length of all the 
geodesic paths between points of $\Gamma$ of valence different from two,
and $d_{\max}$ is the largest valence of 
points of $\Gamma$. Along the way, we also establish discrete versions of the above inequality concerning finite simple graph models of $\Gamma$ and their 
spectral gaps. 
\end{abstract}

\section{Introduction}

 Let $M$ be a compact Riemann surface, equipped with a metric of constant curvature in its conformal class, and denote by 
 $\lambda_1(M)$ and $\mu(M)$ the first non-trivial eigenvalue of the 
 Laplacian and the volume of $M$, respectively. Denote by $\gamma(M)$ the gonality of $M$, which is by definition, 
 the minimum degree of a branched covering $M \rightarrow \mathbb P^1(\mathbb C)$. 
 It follows from the work of Yang-Yau~\cite{YY} (see Li-Yau~\cite{LiYau} for a refinement concerning the conformal invariant of Riemannian manifolds) that 
 for any Riemann surface $M$, the following inequality holds 
 $$\lambda_1\,\mu(M)\leq 8\pi \gamma(M).$$
 
 This result has been quite useful for applications in arithmetic geometry, 
for instance in the study of rational points of bounded degree on smooth proper curves over a number field $K$, 
see for example~\cite{Abra, EHK}. Indeed a theorem of Faltings-Frey~\cite{FF} implies that curves of large gonality have 
only finite number of points defined over finite extensions of $K$ of bounded degree, and the Yang-Li-Yau inequality 
above provides a practical lower bound on the gonality in terms of geometric invariants of 
a complexification of the curve. 

\medskip

 It is quite natural to ask for the analogue type of estimates for smooth proper curves defined over 
 other base fields, 
 e.g., over global function fields. A geometric object manageable to work with is the 
 analytification of the curve over any place of the global field. For any non-Archimedean place $\nu$ 
 of a global field $K$, 
 Berkovich analytification of the curve over the completion of the algebraic closure of $K$ with respect to $\nu$ is a 
 separated compact path-wise connected topological space, which deformation retracts to a finite metric graph, which is 
 called a skeleton of the Berkovich analytification~\cite{Ber90}. 
 This finite metric graph captures  important arithmetic and geometric aspects of the original curve, see e.g. ~\cite{Zhang, CR, Cin, Thuillier, Bak, ABBR1} for background on arithmetical and algebraic geometric properties of 
 the skeleton of Berkovich analytic curves, and applications. 

\medskip

For metric graphs there are two different notions of gonality; geometric gonality, which is formulated in terms of finite 
harmonic maps from $\Gamma$ to a metric tree $T$, and divisorial gonality, which is defined in terms of the 
divisor theory. In this paper we prove a Yang-Li-Yau type inequality for the divisorial gonality in general metric graphs. 
Since the divisorial gonality is a lower bound for geometric gonality of metric graphs, our results 
improve the previous Yang-Li-Yau type inequality of~\cite{CKK}, and provides a generalization 
to arbitrary metric graphs.   The result in~\cite{CKK} was already used there to obtain a linear lower bound in the genus for the gonality of Drinfeld modular curves, which allowed to lower bound the modularity of elliptic curves over function fields, to obtain finiteness results of rational points of bounded degree on Drinfeld modular curves, and to get uniform bounds on isogenies and torsion points of Drinfeld modules. %

The proof of our result is built on the fundamental notion of tree-decomposition in 
graph minor theory. The link between gonality of graphs and their tree-decompositions was 
conjectured in~\cite{DB}, and could be an indication that graph decompositions methods and minors could be useful for further understanding of algebraic geometry of metric graphs, and for potential applications in arithmetic geometry.

 \medskip
 
 Our theorem can be stated as follows. Let $\Gamma$ be a metric graph, and denote by $\gamma_{\div}$ the 
 divisorial gonality of $\Gamma$, which is by definition, the smallest integer $d$ such that there exists a 
 divisor of degree $d$ and rank one on $\Gamma$ (we review all the basic definitions later in this introduction).

Let $\Delta$ be the (continuous) Laplacian of $\Gamma$, and $\lambda_1$ the first non-trivial eigenvalue of $\Delta$. 
Denote by $\mu(\Gamma)$ the total length of $\Gamma$,   and by $d_{\max}$ the maximum valence of points of $\Gamma$ (which is the maximum degree of any simple graph model of $\Gamma$). For a simple graph model $G$ of $\Gamma$, 
let $\ell_{\min}(G)$ be the minimum length of edges in $G$, and define $\ell_{\min}(\Gamma)$ as the maximum of $\ell_{\min}(G)$ over all simple graph models $G$ of $\Gamma$. 

\begin{thm}\label{thm:divgon} There exists a constant $C$ such that for any compact metric graph $\Gamma$ of 
total length $\mu(\Gamma)$ with first non-trivial eigenvalue $\lambda_1(\Gamma)$ of the Laplacian $\Delta$, 
the following holds
$$\gamma_\div(\Gamma) \geq C \frac{\lambda_1(\Gamma)\ell_{\min}(\Gamma)\mu(\Gamma)}{d_{\max}}.$$
\end{thm}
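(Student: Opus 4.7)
The plan is to reduce the continuous statement to a discrete Yang--Li--Yau inequality on a carefully chosen simple graph model of $\Gamma$, and then to prove the latter by turning a rank-one divisor of minimum degree $\gamma_\div$ into an explicit test function for $\lambda_1$.

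First, I would fix a simple graph model $G$ of $\Gamma$ realizing $\ell_{\min}(G)=\ell_{\min}(\Gamma)$ and compare the continuous Laplacian on $\Gamma$ with a combinatorial Laplacian $\Delta_G$ on $G$. A standard variational argument, based on piecewise-linear extension of vertex functions along edges, relates the Rayleigh quotients on the two sides by an explicit factor involving $\ell_{\min}$, $\mu(\Gamma)$ and $d_{\max}$. Combined with the stability of $\gamma_\div$ under sufficiently fine refinements of graph models, this reduces Theorem~\ref{thm:divgon} to a discrete inequality of the form
\[
\gamma_\div(G)\;\geq\; c\,\frac{\lambda_1(\Delta_G)\,|V(G)|}{d_{\max}(G)},
\]
which is exactly the announced discrete version of the theorem.

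The core of the argument is then to extract from a rank-one divisor $D$ of degree $d=\gamma_\div(G)$ a balanced vertex separator $S\subset V(G)$ with $|S|=O(d)$. By Dhar's burning algorithm, each vertex $v\in V(G)$ supports an effective $v$-reduced representative $D_v\sim D$ of degree $d$, and the combinatorics of how these supports overlap is exactly what is needed to build a tree-decomposition of $G$ of width $O(d)$, in the spirit of the bound $\mathrm{tw}(G)\leq \gamma_\div(G)$ conjectured in \cite{DB}. Standard splitting lemmas for tree-decompositions then yield an $S$ whose removal disconnects $G$ into parts $A$ and $B$ of comparable size. Taking $f=\mathbf{1}_A-\alpha\,\mathbf{1}_B$, with $\alpha$ chosen so that $f$ is orthogonal to constants in the degree-weighted inner product, the support boundary of $f$ consists of at most $|S|\,d_{\max}$ edges, so the Rayleigh quotient of $f$ is bounded by $O(|S|\,d_{\max}/|V(G)|)$. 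Plugging $f$ into the variational principle for $\lambda_1(\Delta_G)$ yields the discrete inequality, and combining with the reduction of the previous paragraph gives the theorem with an explicit $C$.

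The hardest part will be the passage from a rank-one divisor to a balanced separator of size $O(\gamma_\div)$: the precise link between divisorial gonality and treewidth is delicate and, in the generality needed here, not available as a black-box theorem. One therefore likely has to argue by hand, exploiting directly the defining freedom of rank one to place the ``free'' chip of $D$ at any prescribed vertex, and then producing balanced cuts by a minimax argument over the linear system $|D|$. A secondary technical point is the care required in the discrete-to-continuous spectral comparison when edge lengths are non-uniform, which is precisely what forces the factor $\ell_{\min}(\Gamma)$ to appear in the final bound.
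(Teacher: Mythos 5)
Your overall plan matches the paper's architecture: reduce to a discrete Yang--Li--Yau inequality on a well-chosen model, prove the discrete inequality via a separator/treewidth-style spectral bound, and transfer the Rayleigh quotient from the metric graph to the model. The continuous-to-discrete reduction is indeed done as you describe, though the paper is more careful than your sketch suggests: it first rescales so $\ell_{\min}=1$ and then subdivides the chosen model so that every edge has length between roughly $1/4$ and $1/2$, which is what makes $|V(\overline G)| \asymp \mu(\Gamma)$ and $\lambda_1(\Gamma)\lesssim\lambda_1(\overline G)$ hold with universal constants; without this normalizing subdivision, a model realizing $\ell_{\min}(G)=\ell_{\min}(\Gamma)$ could have far fewer vertices than $\mu(\Gamma)/\ell_{\min}$, and your ``explicit factor'' would not come out right.

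The genuine gap is at the step you yourself flag as hardest, and it is not a gap you can defer: the passage from a rank-one divisor of degree $d$ to a tree-decomposition (or balanced separator) of width $O(d)$. You write that ``the combinatorics of how these supports overlap is exactly what is needed to build a tree-decomposition of width $O(d)$,'' but this is an assertion, not an argument, and it is essentially equivalent to proving the gonality--treewidth conjecture of~\cite{DB}. Nothing in your Dhar's-algorithm observation or the proposed ``minimax argument over $|D|$'' shows how to produce the nested family of separators a tree-decomposition requires. The paper sidesteps precisely this difficulty by working with the \emph{dual} objects: it introduces topological brambles, proves directly that a top-bramble of order $k+1$ obstructs any rank-one divisor of degree $k$ (Theorem~\ref{thm:divgontopbram}, whose argument uses a carefully chosen monotone path $D_t=D+\div(\max(f,t))$ inside $|D|$ and a maximizing choice of representative), translates these into strong brambles on a model, proves a new Menger-based duality $wtw(G)=sbn(G)$ for a relaxed notion of tree-decomposition, and finally observes $2\,wtw(G)\geq tw(G)+1$ before invoking Chandran--Subramanian. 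This bramble-side route avoids ever having to construct a tree-decomposition from a divisor, at the cost of a factor of $2$ (which the paper explicitly notes is recoverable by citing the later theorem of~\cite{dBG}). So your proposal is not a shortcut around the paper's main technical work; it simply names the hard theorem and moves on. To make it a proof, you would either need to supply a genuine construction of a width-$O(d)$ tree-decomposition from a rank-one divisor (the content of~\cite{dBG}), or replace your separator-building step with something like the paper's top-bramble argument.
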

Our method gives a constant $C$ which is equal to $\frac 1{1024}$, however, in order to simplify the presentation, 
we do not try to optimize the constant. 

\noindent Note also that we have $\frac 12 \ell_{\min}^{\mathrm{geo}} \leq \ell_{\min} \leq \ell_{\min}^{\mathrm{geo}}$, for $\ell_{\min}^{\mathrm{geo}}$ defined as the minimum length of all the geodesic paths 
between any two points of $\Gamma$ of valence different from two, which gives the statement in the abstract up to a change in the value of the constant. 

\medskip

The proof of our theorem goes as follows. Generalizing a work of van Dobben de Bruyn~\cite{DB}, we first prove 
a theorem which relates the divisorial gonality of a metric graph to the existence of a particular 
structure in the metric graph that we call a topological bramble. 
We then relate topological brambles to structures called strong brambles in simple graph models of $\Gamma$.
We provide a dual notion for strong brambles, which is a relaxed version of tree-decomposition of graphs, that we call 
weak tree-decompositions. 
An inequality between  the weak tree-width and the tree-width of finite graphs allows to
use a spectral lower bound on tree-width to get a spectral lower bound for 
the divisorial gonality in terms of a finite simple graph model of $\Gamma$. 
A particular choice of a finite simple graph model $G$ of $\Gamma$ allows us to finish the proof of our theorem.

\medskip

As a consequence of our methods, we get the following theorem, c.f. Section~\ref{sec:random}.

\begin{thm}\label{thm:random} 
\begin{itemize}
\item The divisorial gonality of a random Erd\"os-R\'enyi graph $G(n,p)$ is $\Theta(n)$ asymptotically almost surely in the range $p>>\frac 1n$. More generally, the divisorial gonality of any metric graph whose model is a random $G(n,p)$ is $\Theta(n)$. 
\item The divisorial gonality of a random $d$-regular graph is $\Theta(n)$ asymptotically almost surely, for $d\geq 3$.  \end{itemize}
\end{thm}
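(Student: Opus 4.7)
The plan is to establish the upper bound $\gamma_\div = O(n)$ by a trivial divisor-theoretic argument, and the matching lower bound $\gamma_\div = \Omega(n)$ by feeding standard spectral estimates for random graphs into (the discrete form of) Theorem~\ref{thm:divgon}.

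The upper bound is essentially immediate: for any connected simple graph $G$ on $n$ vertices, the divisor $D_0 = \sum_{v \in V(G)} v$ has degree $n$ and rank at least one, since $D_0 - w$ is already effective for every vertex $w$. Hence $\gamma_\div \leq n$ for any combinatorial model, and in particular for the metric graphs considered in the theorem.

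For the lower bound in the random $d$-regular case ($d \geq 3$ fixed), I would first invoke Friedman's theorem to conclude that asymptotically almost surely the combinatorial Laplacian spectral gap $\lambda_1^{\mathrm{comb}}(G)$ is bounded below by $d - 2\sqrt{d-1} - o(1)$, uniformly in $n$. Viewing $G$ as a metric graph $\Gamma$ with unit edge lengths, a standard computation --- writing eigenfunctions of the continuous Laplacian on each edge as $A\cos(\omega x) + B\sin(\omega x)$ with $\omega = \sqrt{\lambda}$ and imposing continuity together with the Kirchhoff condition at each vertex --- yields the trigonometric relation
\[
\cos(\omega) \;=\; 1 - \frac{\lambda_1^{\mathrm{comb}}(G)}{d},
\]
so $\lambda_1(\Gamma) = \omega^2$ is bounded below by a positive constant $c_d > 0$ (for any fixed $d \geq 3$, using that $2\sqrt{d-1}/d < 1$). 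Substituting $\mu(\Gamma) = nd/2$, $\ell_{\min}(\Gamma) = 1$ and $d_{\max}(\Gamma) = d$ into Theorem~\ref{thm:divgon} immediately gives $\gamma_\div(\Gamma) = \Omega(n)$.

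The Erd\"os-R\'enyi case proceeds along the same lines once a connected regime is chosen: for $p \gg \log n / n$ the graph is a.a.s.\ connected with $d_{\max} = (1 + o(1))np$ and combinatorial spectral gap of order $np$; for $1/n \ll p \ll \log n / n$ one restricts to the essentially unique giant component, which has the same qualitative spectral behaviour. The main technical point I anticipate is that when $G$ is only almost-regular the clean trigonometric identity above is no longer directly available, and one instead needs the discrete form of Theorem~\ref{thm:divgon} promised in the abstract in order to relate $\gamma_\div$ quantitatively to the discrete spectral gap of a non-regular host graph. Once this discrete inequality is in hand, the standard concentration results for the spectral gap and degrees of $G(n,p)$ plug in to yield $\gamma_\div(G) = \Omega(n)$; for general metric graphs whose combinatorial model is a random $G(n,p)$ with arbitrary positive edge lengths one additionally optimizes over simple graph models to keep $\ell_{\min}$ under control before applying the bound.
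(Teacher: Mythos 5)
Your overall strategy---trivial upper bound $\gamma_\div \le n$ from the all-ones divisor, lower bound by feeding spectral estimates for random graphs into a discrete version of the Yang--Li--Yau inequality---is the same as the paper's, and the discrete inequality you anticipate is exactly Theorem~\ref{thm:spectdisc}: $\gamma_\div(\Gamma) \ge |V|\,\lambda_1(G)/(24\,d_{\max})$ for any simple graph model $G$ of $\Gamma$. Two remarks on presentation. Your trigonometric detour through the continuous Laplacian of the unit-length equilateral metric graph is unnecessary once Theorem~\ref{thm:spectdisc} is available; Friedman's bound $\lambda_1(G)\ge d-2\sqrt{d-1}-\epsilon$ plugs in directly with $|V|=n$ and $d_{\max}=d$. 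Likewise your closing remark about ``optimizing over simple graph models to keep $\ell_{\min}$ under control'' is misdirected: the combinatorial bound of Theorem~\ref{thm:spectdisc} does not involve $\ell_{\min}$ at all, which is precisely why the statement extends to arbitrary positive edge lengths with no further work.

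The genuine gap in your argument is the sparse Erd\"os--R\'enyi regime $1/n \ll p \ll \log n / n$. There, the spectral route fails: the giant component of $G(n,c/n)$ a.a.s.\ has diameter $\Theta(\log n)$ and contains pendant paths of logarithmic length, so its combinatorial Laplacian gap is $O(1/\log^2 n)$, while its maximum degree grows like $\Theta(\log n/\log\log n)$; the right-hand side of Theorem~\ref{thm:spectdisc} is then $o(n)$. Asserting that the giant component ``has the same qualitative spectral behaviour'' as in the denser regime is simply false in this window. The paper avoids this entirely by switching to a non-spectral argument: it invokes Gao's result that $tw(G(n,p))\ge \beta n$ a.a.s.\ whenever $pn\to\infty$, and then applies the chain $\gamma_\div(\Gamma) \ge wtw(G) \ge (tw(G)+1)/2$ coming from Corollary~\ref{cor1} and Proposition~\ref{prop1}, which needs no spectral gap. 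You would have to replace the hand-waving in that range with this treewidth argument (or some other purely combinatorial input) for the proof to close.
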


(Recall that  the notation $f=\Theta(g)$ for two functions $f$ and $g$ means the existence of constants $c_1, c_2>0$ such that $ c_1 g\leq f\leq c_2 g$.)

\medskip
 
 Finally, we like to mention two independent simultaneous works~\cite{dBG} and~\cite{DJKM}. 
 In~\cite{dBG} the authors prove the above mentioned conjecture of~\cite{DB}, 
 and they extend it to metric graphs. Namely they show that the divisorial gonality of a 
 metric graph $\Gamma$ is lower bounded by the treewidth of any simple graph model of $\Gamma$. 
 Combining their theorems and our methods, it is possible to improve the current constant in 
 Theorem~\ref{thm:divgon} by a factor two. Moreover, in~\cite{DJKM}, the authors obtain a 
 sharper estimate on the divisorial gonality of random Erd\"os-R\'eny graphs. 
 More precisely, they show that the divisorial gonality of a random $G(n,p)$ is asymptotically almost surely 
$n-o(n)$, assuming $pn \rightarrow \infty$, with an estimate they provide on the error 
term.

\medskip

In the rest of this introduction, we provide necessary definitions and background on algebraic geometry and harmonic analysis on metric graphs, and recall the concept of tree-decompositions. We also discuss some direct consequences of our main theorem.

\subsection{Algebraic geometry of metric graphs}
In this section, we provide some background on algebraic geometry of metric graphs. More details can be found in~\cite{BN07, MZ08, Bak, ABBR1, ABBR2}.

\subsubsection{Metric graphs} 
Given $n\in\ZZ_{\ge 1}$, we define $S_n\subset\CC$ to be a ``star with $n$ branches'', i.e., 
a topological space homeomorphic to the union of the convex hull in $\RR^2$ of $(0,0)$ and any point among a set of $n$ points no two of them lie
on a common line through the origin.
We also define $S_0=\{0\}$.
A finite topological graph $\Gamma$ is the topological realization of a
finite graph: $\Gamma$ is a compact one dimensional topological space
such that for any point $p\in\Gamma$, there exists a neighborhood $U_p$
of $p$ in $\Gamma$ homeomorphic to some $S_n$; moreover there are only
finitely many points $p$ with $U_p$ homeomorphic to $S_n$ with $n\ne 2$.

The unique integer $n$ such that $U_p$ is homeomorphic to $S_n$ is called
the valence of $p$ and denoted $\mathrm{val}(p)$. 
A point of valence different from $2$ is called an essential vertex of
$\Gamma$: they are of two types, $v$ with $\mathrm{val}(v) \geq 3$ which are called branching points, and $v$ for which $\mathrm{val}(v)=1$ which are called ends of $\Gamma$. The set of tangent directions at $p$ is 
$T_p(\Gamma) = \varinjlim_{U_p} \pi_0(U_p\setminus\{p\})$, where the limit
is taken over all neighborhoods of $p$ isomorphic to a star with $n$
branches.   The set $T_p(\Gamma)$ has precisely $\mathrm{val}(p)$ elements.

\medskip

A metric graph $(\Gamma,\ell)$ is a compact connected metric space, such that for every $p\in\Gamma$ there is a radius $r_p\in\R_{>0}$ such that there is a neighborhood $U_p$ around $p$ which is isometric to the star shaped domain $S(\val( p ),r_p):=\{re^{2\pi i m/\val ( p )}: 0<r<r_p,1\leq m\leq \val( p )\}\subset\mathbb C$ equipped with the path-metric.  
We will usually drop the metric $\ell$ from the notation and simply refer to  $\Gamma$ as the metric, and the corresponding topological, graph. We use the notation $T_p(\Gamma)$ to denote the set of all unit tangent vectors emanating from $p$ in $\Gamma$ (which gets identified with the unit vectors $e^{2\pi i m/\mathrm{val}( p )}$ in $\mathbb C$ under the isometry of $U_p$ with $S(\mathrm{val}( p ),  r_p)$).

\smallskip
For a function $f:\Gamma\to\mathbb C$, a point $p \in \Gamma$ and a unit tangent vector $w\in T_p(\Gamma)$, the directional derivative $d_wf(x)$ of $f$ at $p$ in the direction of $w$, which we simply call the outgoing slope of $f$ at $p$ along $w$, is defined by:
\[d_wf(x)=\lim_{t\downarrow0}\frac{f(x+tw)-f(x)}{t},\]
if the limit exists. Note that the above expression makes sense by (isometrically) identifying a small enough neighborhood $U_p$ of $p$ with a star shaped domain $S(\val( p ), r_p)$ in $\mathbb C$, and by restricting $f$ to $U_p = S(\val( p ), r_p)$.

\smallskip

Let $\Gamma$ be a metric graph. A vertex set
$V(\Gamma)$ is a finite subset of the points
of $\Gamma$ which  contains all the essential points of $\Gamma$.
An element of a fixed vertex set $V(\Gamma)$ is called a vertex of
$\Gamma$, and the closure of a connected component of
$\Gamma\setminus V(\Gamma)$ is called an edge of $\Gamma$. We denote by $E(\Gamma)$ the set of all edges of $\Gamma$ with respect to the vertex set $V(\Gamma)$. The (combinatorial) graph $G = (V(\Gamma),E(\Gamma))$ is called a model of $\Gamma$. A model $G$ of $\Gamma$ is simple if there is no loop edge or double edge in $E$.
Since $\Gamma$ is a metric graph, we can associate to each edge $e$ of
a model $G=(V,E)$ its length $\ell(e)\in\mathbb R_{>0}$. 

The model $G=(V,E)$ of a metric graph $\Gamma$ with $V$ the set of all essential points of $\Gamma$ is called the minimal model of $\Gamma$. We denote by $\ell_{\min}$ the minimum length of the edges in the minimal model of $\Gamma$. The volume $\mu(\Gamma)$ of $\Gamma$ is the sum of the edge lengths in any model $G$ of $\Gamma$. We denote by $d_{\max}$ the maximum valence of points of $\Gamma$.

\subsubsection{Divisor theory on metric graphs and divisorial gonality}
 We recall some basic definitions concerning the divisor theory of metric graphs and the notion of divisorial gonality. 
 See~\cite{BN07, MZ08} for more details. 
  
For a metric graph $\Gamma$, let $\Div(\Gamma)$ be the free abelian group on points of $\Gamma$. An element $D$ of $\Div(\Gamma)$ is called a divisor on $\Gamma$ and can uniquely be written as \[D=\sum_{v\in\Gamma}a_v(v), \text{ with }a_v\in\Z,\]
where all but finitely many $a_v$ are zero. The degree of $D$ is $\deg(D)=\sum_{v\in\Gamma}a_v$. A divisor $D$ is effective if $D(v)\geq 0$ for all $v\in \Gamma$.

The set of points $v$ for which $a_v$ is nonzero is called the support of $D$ and is denoted by $\supp(D)$. 

A rational function on $\Gamma$ is a continuous piecewise linear function on $\Gamma$ whose outgoing slopes are all integers. The set of all rational functions on $\Gamma$ is denoted by $R(\Gamma)$. The order of a rational function $f$ at a point $p$ of $\Gamma$, denoted by $\ord_p(f)$, is the sum of the outgoing slopes of $f$ along the unique tangent directions in $\Gamma$ emanating from $p$. As $f$ is piecewise linear, and $\Gamma$ is compact, the order of $f$ is zero on all but finitely many points of $\Gamma$, and one gets a map $$\div:R(\Gamma)\to\Div(\Gamma), f\mapsto\sum_p \ord_p(f)( p ).$$
A divisor in the image of $\div$ is called a principal divisor.
Two divisors, $D$ and $D'$ are called linearly equivalent, written $D\sim D'$, if they differ by a principal divisor, i.e., there is a rational function such that $D=\div(f)+D'$. The (complete) linear system $|D|$ of a divisor $D$ is defined to be the set of all effective divisors which are linearly equivalent to $D$: 
\[|D|:=\{E\in\Div(\Gamma): E\geq 0, E\sim D\}.\] 
We denote by $R(D):=\{f\in\Rat(\Gamma): D+\div(f)\geq 0\}$ the "set of all global sections of $D$". Note that $R(D)$ is closed under addition by constants and under taking maximum, i.e., for $f,g\in R(D)$ and $c\in \mathbb R$, one has $c+ f \in R(D)$ and $\max(f,g)\in R(D)$, in other words, $R(D)$ is a so called tropical semi-module.

\medskip

The rank of a divisor $D$, denoted by $r(D)$ is defined by
\[r(D):=\min_{\{E:E\geq 0,|D-E|=\emptyset\}}\deg(D)-1.\]

The divisorial gonality $\gamma_{div}(\Gamma)$ of a metric graph $\Gamma$ is defined by 
\[\gamma_{div}(\Gamma):=\min\{d:\text{ there exists a } D\in\Div(\Gamma), \text{ with } \deg(D)=d \text{ and } r(D)=1\}.\]

\medskip

\subsubsection{Reduced divisors.} A basic technical tool in the study of divisors on metric graphs is the notion of reduced divisors that we recall now. 

A closed and connected subset of $\Gamma$ is called a cut in $\Gamma$. We denote by $\partial X$, the boundary of $X$: the finite set of points of $X$ which are in the closure of the complement of $X$ in $\Gamma$. For a point $v\in\partial X$, we denote by $\deg^{out}_X(v)$ the number of tangent directions in $T_p(\Gamma)$ leaving $X$ at $v$; in other words, this is the maximum number of disjoint segments in $U_v \setminus X$ whose closures have $v$ as an endpoint, where $U_p$ is a neighborhood of $v$ in $\Gamma$. 

A boundary point $v$ of a cut $X$ is called saturated with respect to a divisor $D\in\Div(\Gamma)$ if $\deg^{out}_X(v)\leq D(v)$.   

A divisor $D$ is called reduced with respect to a fixed point $v_0 \in \Gamma$ if it satisfies the following properties:
\begin{enumerate}
 \item for all $v\not=v_0, D(v)\geq 0$,
 \item for every cut $X\subset \Gamma$ such that $v_0\not\in X$, there exist a $v\in\partial X$ which is not saturated.
\end{enumerate}

Every divisor on a metric graph is equivalent to a unique $v_0$-reduced divisor, see e.g., \cite[Theorem 2]{A}.

Note that if the rank of a divisor $D$ is  non-negative, then for any $v\in\Gamma$ the reduced divisor $D_v$ is effective.

\subsubsection{Harmonic morphisms, tropical modifications, and geometric gonality of metric graphs}
We recall some standard definitions regarding the morphisms between metric graphs and the corresponding tropical curves, 
see~\cite{ABBR1} and the references there 
for a more detailed discussion of the following definitions with several examples.

Let $\Gamma$ and $\Gamma'$ be two metric graphs, and fix vertex sets $V=V(\Gamma)$ and $V'=V(\Gamma')$ for $\Gamma$ and $\Gamma'$, respectively. Denote by $E$ and $E'$ the edge sets $E(\Gamma)$ and $E(\Gamma')$, respectively. 
Let $\phi:\Gamma \to \Gamma'$ be a  continuous map.
\begin{itemize}
\item The map $\phi$ is called a $(V,V')$-morphism of metric
  graphs if we have $\phi(V)\subset V'$, $\phi^{-1}(E')\subset E$, and
the restriction
of $\phi$ to any edge $e$ in $E$ is a dilation by some factor
$d_{e}(\phi)\in\Z_{\geq 0}$.

\item The map $\phi$ is called a morphism of metric graphs if 
there exists a vertex set $V=V(\Gamma)$ of $\Gamma$ and a 
vertex set $V'=V(\Gamma')$ of $\Gamma'$ such that
$\phi$ is a $(V,V')$-morphism of metric
  graphs.

\item The map $\phi$ is said to be finite if 
  $d_{e}(\phi)>0$ for any edge $e\in E(\Gamma)$.
\end{itemize}

The integer $d_{e}(\phi)\in\Z_{\geq 0}$ in the definition above
is called the degree of $\phi$ along $e$. Let $p\in V(\Gamma)$, let $w\in T_{p}(\Gamma)$, and
let $e\in E(\Gamma)$ be the edge of $\Gamma$ in the direction of $w$.  The directional derivative of $\phi$ in the direction $w$
 is by definition the quantity
$d_{w}(\phi) := d_{e}(\phi)$.
If we set $p' = \phi(p)$, then $\phi$ induces a map
\[ d\phi(p)~:~\big\{ w\in T_{p}(\Gamma)~:~d_{w}(\phi)\neq 0 \big\}
\rightarrow  T_p(\Gamma') \]
in the obvious way.

Let $\phi:\Gamma \to\Gamma'$ be a morphism of metric graphs,
let $p\in\Gamma$, and let $p' = \phi(p)$.  The morphism $\phi$
is harmonic at $p$ provided that, for every tangent
direction $w'\in T_{p'}(\Gamma')$, the number 
\[ d_{p}(\phi):=\sum_{\substack{w\in T_{p}(\Gamma)\\w\mapsto w'}} 
d_{w}(\phi) \]
is independent of $w'$. The number $d_{p}(\phi)$  is called the degree of $\phi$ at $p$. 

We say that $\phi$ is
harmonic if it is surjective and 
harmonic at all $p\in \Gamma$; in this case
the number $\deg(\phi) = \sum_{p\mapsto p'} d_{p}(\phi)$ is
independent of $p'\in\Gamma'$, and is called the degree of
$\phi$.

 \medskip
 
 There is  an equivalence relation between metric
graphs, that we recall now; an equivalence class for this relation is called a tropical curve.

An elementary tropical modification of a metric graph $\Gamma_0$ is a
metric graph $\Gamma = [0,\ell] \cup \Gamma_0$ obtained from $\Gamma_0$ by  
attaching a segment $[0,\ell]$ of (an arbitrary) length $\ell>0$ to $\Gamma_0$ in such a way that $0 \in [0,\ell]$ gets identified with 
a point $p \in \Gamma_0$.

A metric graph $\Gamma$ obtained from a metric
graph $\Gamma_0$ by a finite sequence of elementary tropical modifications is called a tropical 
modification of $\Gamma_0$.

\smallskip

Tropical modifications generate an equivalence relation $\sim$ on the set of metric
graphs. A tropical curve
is an equivalence class of metric graphs with respect to $\sim$. 

\smallskip

There exists a unique rational  tropical curve, which is denoted by
$\mathbb T\mathbb P^1$: it is the class of all finite metric trees (which are all equivalent under tropical modifications).

A tropical morphism of tropical curves $\phi:C\to C'$ is a harmonic morphism
of metric graphs between some metric graph representatives of $C$ and  $C'$,
considered up to tropical equivalence.

 A tropical curve $C$ is said to have  a
 (non-metric) graph $G$ as its combinatorial type if $C$ admits
 a representative 
whose underlying graph is $G$.

 A tropical curve $C$ is called $d$-gonal if there exists a
tropical morphism  $C\rightarrow \mathbb T\mathbb P^1$ of degree $d$. A metric graph $\Gamma$ has geometric gonality $d$, if the tropical curve associated to $\Gamma$ is $d$-gonal, and $d$ is the smallest integer satisfying this condition. 
The geometric gonality of a metric graph is denoted by $\gamma_{gm}(\Gamma)$.

It is easy to see that the fibers of any finite harmonic morphisms from a metric graph $\Gamma$ to a finite tree are linearly equivalent, and define a linear equivalence class of divisors on $\Gamma$ of rank at least one. It thus follows that 

$$\gamma_{gm}(\Gamma) \geq \gamma_{\mathrm{div}}(\Gamma)$$ 
for any metric graph $\Gamma$. Our Theorem~\ref{thm:divgon} thus provides a spectral lower bound for the geometric gonality of a metric graph. 
 
 \subsubsection{Specialization of divisors from curves to metric graphs}
Let $X$ be a smooth proper curve over an algebraically closed complete non-Archimedean field $K$ with a non-trivial valuation. 
Recall (c.f.~\cite{BPR}, see also~\cite{Ber90, Ducros, Ducros2, Tem}) that a semistable vertex set of the Berkovich analytic curve 
$X^{\mathrm{an}}$ is a finite subset $V$ of type-2 points 
of $X^{\mathrm{an}}$ such that $X^{\mathrm{an}} \setminus V$ is a disjoint union of open balls and (a finite number of) open annuli.  
Semistable vertex sets are in bijection with semistable models of $X$ over the valuation ring of $K$. 
To each semistable vertex sets is associated a skeleton $\Sigma(X,V)$ of the Berkovich curve 
$X^{\mathrm{an}}$, which is a finite metric graph.  These metric graphs are tropically equivalent, and thus varying the semistable vertex sets defines a tropical curve $C$ associated to $X$.

Fixing a semistable vertex $V$ for $X^{\mathrm{an}}$, one gets a deformation retraction $\tau: X^{\mathrm{an}} \rightarrow \Sigma(X,V)$. Identifying $X(K)$ with points of type 1 on $X^{\mathrm{an}}$, this induces a morphism $\tau_*: \Div(X) \rightarrow \Div(\Sigma(X,V))$ which is called the specialization map. 

\begin{remark}\rm
For curves defined over an arbitrary non-trivially valued non-Archimedean field, one can find an equivalent (more classical) definition of the specialization map without reference to the analytification in~\cite{CR, Bak, Zhang}. The advantage of the above presentation is that the analytification of the curve over the algebraic closure of the completion of the base field, takes care of the renormalization by ramification indices of (the choice of) the finite base field extension over which the original curve admits semistable reduction.
\end{remark}

Let $X$ be a smooth proper curve over $K$ and let $\Gamma$ be a metric graph associated to $X$. Baker's specialization lemma~\cite{Bak} states that for any divisor $D$ on $X$ one has $r(D) \leq r(\tau_*(D))$. (Formulated in terms of the analytification of the curve,  the statement is a consequence of the Poincar\'e-Lelong formula~\cite{BPR, Thuillier} for Berkovich curves, see~\cite{AB}.)

In particular, it follows that the gonality of a smooth proper curve $X$ over a non-Archimedean field $K$ is bounded below by the divisorial gonality of the corresponding metric graph. Applying our main theorem, we get
\begin{thm} Let $X$ be a smooth proper curve over a non-Archimedean field $K$, and let $\Gamma$ be a metric graph associated to $X$. We have 
$$\gamma(X) \geq C \frac{\mu(\Gamma) \ell_{\min}(\Gamma) \lambda_1(\Gamma)}{d_{\max}}.$$
\end{thm}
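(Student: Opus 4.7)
The plan is to combine Baker's specialization lemma, recalled in the preceding paragraph, with the main theorem (Theorem~\ref{thm:divgon}) via the intermediate inequality
\[\gamma(X) \geq \gamma_{\div}(\Gamma).\]
Once this is in hand, applying Theorem~\ref{thm:divgon} directly to $\Gamma$ finishes the proof.

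To establish the intermediate inequality, I would set $d := \gamma(X)$ and choose a finite morphism $\varphi \colon X \to \PP^1_K$ of degree $d$ realizing the gonality of $X$. The divisor class of a fiber $\varphi^{-1}(p)$ has degree $d$ and rank at least one on $X$. Choosing a semistable vertex set $V$ of $X^{\an}$ with $\Sigma(X,V) = \Gamma$, the specialization map $\tau_* \colon \Div(X) \to \Div(\Gamma)$ sends this class to a divisor of degree $d$ on $\Gamma$, and by Baker's specialization lemma its rank on $\Gamma$ is at least one as well. By the very definition of divisorial gonality this forces $\gamma_{\div}(\Gamma) \leq d$, as required.

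Applying Theorem~\ref{thm:divgon} to $\Gamma$ then yields
\[\gamma_{\div}(\Gamma) \geq C \frac{\mu(\Gamma)\,\ell_{\min}(\Gamma)\,\lambda_1(\Gamma)}{d_{\max}},\]
and combining this with $\gamma(X) \geq \gamma_{\div}(\Gamma)$ gives the stated bound. There is no substantial obstacle at this stage: all the hard work has already been done in the proof of Theorem~\ref{thm:divgon}, and Baker's specialization lemma is cited directly. The only care needed is to ensure that the $\Gamma$ appearing in the statement is literally a skeleton of $X^{\an}$ so that the specialization map is defined on it; this is built into the hypothesis ``metric graph associated to $X$'', so the argument goes through verbatim.
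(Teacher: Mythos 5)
Your argument is exactly the paper's: Baker's specialization lemma gives $\gamma(X) \geq \gamma_{\div}(\Gamma)$ (by pushing forward a degree-$d$ rank-one divisor on $X$ to $\Gamma$ and noting the rank does not drop), and then Theorem~\ref{thm:divgon} applied to $\Gamma$ yields the stated bound. The proposal is correct and takes essentially the same route as the paper.
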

Here $C$ is the constant provided by Theorem~\ref{thm:divgon}.

\subsubsection{\it Morphisms of curves induce morphisms of tropical curves} 
Let $X$ and $X'$ be two smooth proper curves over an algebraically closed complete non-Archimedean field $K$. Consider a morphism
 $\phi`: X\rightarrow X'$, and let  
$\phi: X^{\mathrm{an}}\rightarrow X'^{\mathrm{an}}$ be the induced morphism between the Berkovich analytifications of $X$ and $X'^{\mathrm{an}}$.

The proof of the following theorem, as well as more precise statements concerning stronger skeletonized versions of some foundational results of Liu-Lorenzini~\cite{LL}, Coleman~\cite{Col}, and Liu~\cite{Liu} on simultaneous semistable reduction of curves, can be found in~\cite{ABBR1}. 

\begin{thm} Let $\phi: X \rightarrow X'$ be a finite morphism of smooth proper curves over $K$ of degree $d$. 
Let $C$ and $C'$ be the tropical curves associated to $X$ and $X'$. 
Then $\phi$ induces a tropical morphism $\phi: C \rightarrow C'$ of degree $d$. 
\end{thm}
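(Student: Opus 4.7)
The plan is to work on the Berkovich analytifications and use the structure theory of finite morphisms of non-Archimedean analytic curves together with simultaneous semistable reduction. The algebraic morphism $\phi$ extends canonically to a finite morphism $\phi: X^{\mathrm{an}} \to X'^{\mathrm{an}}$ that preserves the Berkovich classification of points; in particular, type-2 points map to type-2 points, and the algebraic degree $d$ equals the generic local degree of $\phi$ on $X^{\mathrm{an}}$.

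The key step is to produce compatible semistable vertex sets. Starting from any semistable vertex set $V'_0$ of $X'^{\mathrm{an}}$, I would invoke the simultaneous semistable reduction theorems in the skeletonized form established in~\cite{ABBR1}, building on Liu-Lorenzini~\cite{LL}, Coleman~\cite{Col}, and Liu~\cite{Liu}, to produce enlargements $V' \supset V'_0$ and a semistable vertex set $V$ of $X^{\mathrm{an}}$ with $\phi(V) \subset V'$, such that each connected component of $X^{\mathrm{an}}\setminus V$ (an open ball or open annulus) is mapped by $\phi$ onto a connected component of $X'^{\mathrm{an}}\setminus V'$ of the same type. This is the main technical obstacle, since one needs to enlarge simultaneously on both sides in a controlled way so that no ball maps across an annulus, and so that annuli map to annuli. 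Granting this, we immediately obtain $\phi(\Sigma(X,V)) \subset \Sigma(X',V')$.

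Once the compatibility of skeleta is in place, I would analyze $\phi$ edge by edge. The restriction of $\phi$ to an open annulus mapping to an open annulus is a Kummer-type map: on the skeleton it is an affine dilation by a positive integer $d_e(\phi)$, equal to the degree of the annulus map. This gives the required edge degree function, making the induced map a finite morphism of metric graphs in the sense of the definitions recalled above.

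Harmonicity at a vertex $x \in V$ with image $x' = \phi(x) \in V'$ follows from the residue-curve picture: the tangent directions at $x$ (respectively $x'$) are in bijection with the closed points of the smooth projective residue curve $C_x$ (respectively $C_{x'}$), and $\phi$ induces a finite morphism $C_x \to C_{x'}$ over the residue field. Its degree equals $d_x(\phi)$, and the classical fiber-degree formula $\sum_{w\mapsto w'} d_w(\phi) = d_x(\phi)$ is independent of $w' \in T_{x'}(C')$, which is exactly harmonicity at $x$. Summing the local degrees $d_x(\phi)$ over any fiber of $\phi: \Sigma(X,V) \to \Sigma(X',V')$ recovers the generic degree $d$ of $\phi^{\mathrm{an}}$, hence of $\phi$, completing the proof that the induced tropical morphism has degree $d$.
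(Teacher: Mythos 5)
The paper does not prove this theorem itself: it explicitly defers to \cite{ABBR1}, where the statement appears together with the skeletonized forms of the simultaneous semistable reduction theorems of Liu--Lorenzini, Coleman, and Liu. Your sketch correctly reproduces the main ingredients of that cited argument --- simultaneous semistable reduction to obtain $\phi$-compatible vertex sets, the annulus-to-annulus (Kummer) picture giving the integral dilation factors $d_e(\phi)$ on edges, and harmonicity at a type-2 point via the induced finite morphism of residue curves and its fiber-degree formula --- so this is the same route the paper points to, not an independent one.
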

Note that, in particular, the (algebraic) gonality of $X$ over $K$ is bounded below by the (geometric) 
gonality of any metric graph $\Sigma(X,V)$. In general the inequality $\gamma(X) \geq \gamma(\Sigma(X,V))$ can be strict (see~\cite{ABBR2} 
for an example of a genus 27 tropical curve $C$ of gonality 4 such that any $X$ over $K$ of genus 27 
with associated tropical curve $C$ has gonality at least 5).

\subsection{Harmonic analysis on metric graphs} 
We recall the definitions of the Laplacian on a metric graph, and refer to~\cite{Zhang, BR, Fab} for more details on harmonic analysis on metric graphs. 

For a metric graph $\Gamma$ with a model $G=(V,E)$, one has a Lebesgue measure on each edge which gives rise to a well-defined Lebesgue measure on $\Gamma$ denoted by $dx$. The Lebesgue measure does not depend on the choice of the model. 

The space $\Zh(\Gamma)$ is the space of all continuous functions $f:\Gamma\to\mathbb R$ where $f$ is piecewise $C^2$ and $f''(x)\in L^1(\Gamma)$. The subspace $\Zh_0(\Gamma) \subset \Zh(\Gamma)$ consists of all functions $f$ which satisfy $\int_\Gamma fdx=0$. The Laplacian $\Delta$ is the measure valued operator on $\Zh(\Gamma)$ whose value on a function $f\in\Zh(\Gamma)$ is the measure 
\[\Delta(f):=-f''(x)dx-\sum_{p\in\Gamma}\left(\sum_{w\in T_p(\Gamma)}d_wf(p)\right)\delta_p(x),\] 
where $\delta_p$ the Dirac measure at the point $p$. 

The eigenvalues of $\Delta$ form a discrete subset $\lambda_0=0<\lambda_1<\lambda_2<\dots$ of $\mathbb R_{\geq 0}$. The behavior of $\lambda_i (\Gamma)$ under the scaling of the edge lengths of a model $G$ is easily seen to be as follows: if the metric graph $\Gamma'$ with the same model $G$ is obtained from $\Gamma$ by scaling the length in $\Gamma$ of each edge $e\in E(G)$ with a factor $\beta\in\R_{>0}$, then $\lambda_i(\Gamma')=\frac{1}{\beta^2}\lambda_i(\Gamma)$~\cite{BR}.

The smallest non-zero eigenvalue of $\Delta$, $\lambda_1(\Gamma)$, has the following variational characterization
\[\lambda_1(\Gamma)=\inf_{f\in\Zh_0(\Gamma)}\frac{\int_\Gamma|f'|^2dx}{\int_\Gamma f^2dx}.\]

\subsection{Tree-decompositions}\label{intro:tw}

 Let $G=(V,E)$ be a connected graph. A
tree-decomposition of $G$ is
a pair $(T,{\mathcal X})$ where $T$ is a finite tree on a set of vertices $I$, and ${\mathcal X}=\{X_i : i\in I\}$ 
is a collection of subsets of
$V$, subject to the following three conditions: 
\begin{enumerate}
\item $V=\cup_{i\in
  I}X_i$, 
  \item for any edge $e$ in $G$, there is a set $X_i\in {\mathcal
  X}$ which contains both end-points of $e$, 
  \item for any triple
$i_1,i_2,i_3$ of nodes of $T$, if $i_2$ is on the path from $i_1$ to
$i_3$ in $T$, then $X_{i_1}\cap X_{i_3}\subseteq X_{i_2}$, or equivalently, for any vertex $v$ in $G$, the set of nodes $i$ of $T$ with $v \in S_i$ form a connected subtree of $T$.
\end{enumerate}

Note that the point (3) in the above definition 
simply means that the subgraph of $T$ induced by all the vertices $i$ which 
contain a given vertex $v$ of the graph $G$ is connected.

The width of a tree-decomposition $(T,{\mathcal X})$ is defined as $w(T,{\mathcal
  X})=\max_{i\in I}|X_i|-1$. The tree-width of $G$, denoted by $tw(G)$,  is the minimum width of any 
  tree-decomposition of $G$.  

  \medskip

 There is a useful duality theorem concerning the tree-width which allows in practice to bound the tree-width of graphs. 
 The dual notion for tree-width is bramble (as named by B. Reed~\cite{Reed}): 
 a bramble in a finite graph $G=(V,E)$ is a collection of 
 connected subsets of $V$ (i.e., those inducing a connected subgraph) such that the union of any two of these subsets form again a connected subset of $G$. 
 The order of a bramble $\mathcal F$ in $G$ is the minimum size of a hitting set for $\mathcal F$, i.e., the minimum size of a subset of vertices which has non-empty intersection with any element of $\mathcal F$.  The bramble number of 
 $G$ denoted by $bn(G)$ is the maximum order of any bramble in $G$.
 
 \begin{thm}[Seymour-Thomas~\cite{ST}]
  For any graph $G$, $tw(G) = bn(G)$.
 \end{thm}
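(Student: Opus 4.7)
The plan is to prove the two inequalities $bn(G) \leq tw(G)+1$ and $bn(G) \geq tw(G)+1$, which constitute the standard form of the Seymour--Thomas duality (the equality displayed in the statement matches this up to a normalization of one of the two invariants).

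\textbf{Easy direction ($bn(G) \leq tw(G)+1$).} Fix a tree-decomposition $(T, \mathcal{X} = \{X_i\}_{i \in I})$ of $G$ of width $k = tw(G)$, and let $\mathcal{F}$ be any bramble in $G$. For each $F \in \mathcal{F}$, define $I_F := \{i \in I : X_i \cap F \neq \emptyset\}$. Using axioms (1)--(3) of tree-decompositions, I first verify that each $I_F$ induces a connected subtree of $T$: every vertex of $F$ gives a subtree by axiom (3), and connectedness of $F$ together with axiom (2) glues these subtrees together. Next, for any two $F, F' \in \mathcal{F}$, connectedness of $F \cup F'$ forces either a shared vertex or an edge between $F$ and $F'$, and in either case a bag lies in $I_F \cap I_{F'}$. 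The Helly property of subtrees of a tree then supplies a common node $i_0 \in \bigcap_{F \in \mathcal{F}} I_F$, and $X_{i_0}$ is a hitting set for $\mathcal{F}$. Hence the order of $\mathcal{F}$ is at most $|X_{i_0}| \leq k+1$.

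\textbf{Hard direction ($bn(G) \geq tw(G)+1$).} This is the substantive content. The cleanest route passes through the auxiliary notion of a \emph{haven} of order $k+1$: a function $\beta$ assigning to each $S \subset V(G)$ with $|S| \leq k$ a connected component $\beta(S)$ of $G \setminus S$, subject to the monotonicity $S \subseteq S' \Rightarrow \beta(S') \subseteq \beta(S)$. I would argue in two steps. \emph{Step A:} if $tw(G) \geq k$ then $G$ admits a haven of order $k+1$. This can be established either by a minimality argument on candidate tree-decompositions (for each small separator $S$, select the ``essential'' component whose presence obstructs a narrow decomposition, and check coherence under refinement of $S$), or equivalently via the cops-and-robber game whose pursuit-number is known to equal $tw(G)+1$, so that a winning robber strategy directly encodes a haven. \emph{Step B:} the haven produces a bramble $\mathcal{F} := \{\beta(S) : |S| \leq k\}$ of order $k+1$. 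The elements are connected by definition; two sets $\beta(S), \beta(S')$ touch because monotonicity along the chain $S \cup S' \supseteq S, S'$ exhibits $\beta(S \cup S')$ as a subset of both (after pruning to ensure $|S \cup S'| \leq k$, which one handles by iteratively reducing), and further vertex- or edge-connections are extracted from the component structure. The lower bound on the order follows because if $H$ with $|H| \leq k$ were a hitting set, then $H$ would have to meet $\beta(H)$, contradicting that $\beta(H)$ is a component of $G \setminus H$.

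The main obstacle is Step A of the hard direction: extracting a canonical haven out of the mere non-existence of a narrow tree-decomposition. A direct inductive construction must delicately manage the interplay between partial tree-decompositions and their improvements along small separators, ensuring the component choices are consistent under refinement; the alternative game-theoretic route requires first establishing, by a separate combinatorial argument, that the cop-number with an infinitely fast robber coincides with $tw(G)+1$. Once either of these is in hand, the easy direction, the passage from haven to bramble in Step B, and the hitting-set contradiction are essentially formal.
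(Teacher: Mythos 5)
The paper does not prove this statement: it is stated as a cited black box (Seymour--Thomas, \cite{ST}). What the paper \emph{does} prove from scratch, in Theorem~\ref{thm:stbramwt}, is the exactly analogous duality between strong brambles and weak tree-decompositions ($wtw(G) = sbn(G)$), and there the authors explicitly say their proof ``mimics the well-known proof of the duality theorem between tree-width and bramble order.'' That proof is the classical one: the easy direction via the Helly property of subtrees of a tree, and the hard direction via a reverse induction on the size of the bramble, using Menger's theorem to route vertex-disjoint paths from a small separator to a big bag. So the natural benchmark for your proposal is the Menger-based inductive argument, not a haven argument.

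Your easy direction is fine and matches the standard approach. For the hard direction, however, your proposal is an outline rather than a proof, and it routes through havens, which is genuinely different from the Menger-based route the paper uses for its analogue. Two issues deserve flagging. First, you yourself identify Step~A (``$tw(G) \geq k$ implies a haven of order $k+1$'') as ``the main obstacle'' and only gesture at two possible ways to establish it; but this step carries essentially all of the content of the Seymour--Thomas theorem, and neither the ``minimality argument on candidate tree-decompositions'' nor the cops-and-robber equivalence is spelled out. In particular, the game-theoretic route would itself need a full proof of the $tw(G)+1$ cop-number theorem, which is not shorter than a direct proof. Second, Step~B is not as ``essentially formal'' as you claim: the family $\{\beta(S) : |S| \leq k\}$ is not obviously a bramble, because for two sets $S,S'$ of size up to $k$ the set $S \cup S'$ may have size larger than $k$, so monotonicity does not directly exhibit $\beta(S)$ and $\beta(S')$ as touching, and the ``pruning'' you invoke is left unjustified. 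The usual clean direction here is the reverse one (bramble $\Rightarrow$ haven); the haven $\Rightarrow$ bramble direction typically goes back through tree-width, which would make your argument circular. If you want a self-contained proof, the Menger-based reverse induction used in the paper's Theorem~\ref{thm:stbramwt} avoids havens entirely and is the more robust path. Finally, you are right to note the off-by-one: with the paper's definitions (width $= \max|X_i|-1$, bramble order $=$ minimum hitting-set size), the Seymour--Thomas theorem reads $bn(G) = tw(G)+1$, not $bn(G) = tw(G)$ as displayed; the paper's statement appears to contain a normalization slip, and your proof sketch targets the correct statement.
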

 
 To give an example of the applications of the duality theorem,  let $H$  be an $n\times n$ grid. It is easy to see that $bn(H) =n$ by taking brambles formed by crosses in the grid. This shows that grid graphs can have large tree-width, and so the tree-width can take arbitrary large values on planar graphs.  

Duality theorems are part of Robertson-Seymour graph minor theory~\cite{RS}.  For a discussion of the different duality theorems and diverse generalizations see~\cite{AMNT, DO}.

 \section{Topological brambles and divisorial gonality}
  In this section we provide a lower bound on the divisorial gonality in terms of a topological variant of 
  the notion of bramble in finite graphs. 
  \begin{defi}[Topological bramble]\label{top-bramble}\rm
   Let $\Gamma$ be a metric graph. A topological bramble (or simply top-bramble) in $\Gamma$ is a finite family
   $\mathcal F$ of non-empty 
   closed connected metric subgraphs of $\Gamma$ such that any two elements $X$ and $Y$ in $\mathcal F$ have a non-empty intersection.
   The order of a top-bramble $\mathcal F$ is the minimum size of a hitting set for $\mathcal F$, i.e., the minimum size of a set 
   $S \subset \Gamma$ such that $S \cap X \neq \emptyset$ for any $X \in \mathcal F$. The topological bramble number of 
   $\Gamma$ denoted by 
   $tbn(\Gamma)$ is the maximum order of any topological bramble on $\Gamma$.
  \end{defi}

 \begin{thm}\label{thm:divgontopbram}
  Let $\Gamma$ be a metric graph. The divisorial gonality of $\Gamma$ is lower bounded by 
  its top-bramble number $tbn(\Gamma)$.
 \end{thm}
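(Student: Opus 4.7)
Let $D$ be a divisor on $\Gamma$ with $r(D)\geq 1$ and $\deg(D)=\gamma_{\div}(\Gamma)$, and let $\mathcal F$ be a topological bramble of order $k$. The plan, inspired by van Dobben de Bruyn's approach for finite graphs, is to produce an effective divisor $D'\in|D|$ whose support meets every element of $\mathcal F$; since the support of an effective divisor has cardinality at most its degree, this would yield $k\leq \deg(D)=\gamma_{\div}(\Gamma)$. The candidate is a $v_0$-reduced divisor $D_{v_0}\in|D|$ for a carefully chosen base point $v_0$: because $r(D)\geq 1$, one has $D_{v_0}(v_0)\geq 1$, so $v_0\in\supp(D_{v_0})$ and $|\supp(D_{v_0})|\leq\deg(D)$.

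Fix a distinguished bramble element $X_0\in\mathcal F$ and choose $v_0\in X_0$; the main step is to show that for an appropriate choice of $v_0$, $\supp(D_{v_0})\cap X\neq\emptyset$ for every $X\in\mathcal F$. I argue by contradiction: suppose some $X\in\mathcal F$ is missed. Then $v_0\notin X$, while by the bramble property there is a point $u\in X\cap X_0$. Consider a maximal closed connected subset $Y$ with $X\subseteq Y$ and $Y\cap(\supp(D_{v_0})\cup\{v_0\})=\emptyset$; by construction $\partial Y\subseteq\supp(D_{v_0})\cup\{v_0\}$, and assuming $v_0\notin Y$, the $v_0$-reducedness of $D_{v_0}$ applied to $Y$ yields a boundary point $w\in\partial Y\cap\supp(D_{v_0})$ with $\deg^{out}_Y(w)>D_{v_0}(w)\geq 1$. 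The bramble property enters crucially here via the connectedness of $X_0$: any path in $X_0$ from $v_0$ to $u\in Y$ must cross $\partial Y$, supplying extra outgoing tangent directions at boundary vertices.

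The main obstacle is to turn this local unsaturation data into a genuine contradiction with $v_0$-reducedness. A direct application of the unsaturation condition to $X$ itself is vacuous because $D_{v_0}$ vanishes on $X$ and every boundary point of $X$ has positive outgoing degree; the enlargement to $Y$ is designed precisely to shift the unsaturated boundary into $\supp(D_{v_0})$, but the single inequality at one $w$ is not yet contradictory. The expected route is to aggregate the unsaturation inequalities over all boundary points of $Y$ and combine them with a careful tangent-direction count at branching points --- where the metric nature of $\Gamma$ requires more care than the discrete case --- eventually producing a closed connected cut $X'\subseteq\Gamma\setminus\{v_0\}$ every boundary point of which is saturated by $D_{v_0}$, contradicting the definition of $v_0$-reducedness. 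Alternatively, one can run a continuous analog of Dhar's burning algorithm starting at $v_0$ and track the fire's first entry into $X$ along the connecting path in $X_0$, extracting the same contradiction; the tangent-direction accounting at branching points of the boundary is in either approach the main source of technical difficulty.
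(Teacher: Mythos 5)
Your proposal is not a proof: you explicitly leave the key step --- turning the local unsaturation data at $\partial Y$ into a genuine contradiction with $v_0$-reducedness --- as an unresolved ``main source of technical difficulty.'' The difficulty is real, and the strategy of producing the hitting set as the support of a single reduced divisor does not survive closer inspection. Consider a circle $\Gamma$ of circumference $1$, $D = 2(0)$ (so $|D| = \{(x)+(1-x)\,:\,x\in[0,\tfrac12]\}$), and the bramble $\mathcal F = \{A_1,A_2,A_3\}$ given by the three closed arcs of length $0.4$ starting at $0$, $\tfrac13$, $\tfrac23$; this bramble has order $2 = \gamma_{\div}(\Gamma)$. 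Picking $X_0 = A_1$ and $v_0 = 0.35 \in X_0$, the reduced divisor is $D_{v_0} = (0.35)+(0.65)$, whose support misses $A_3$. So the statement ``for an appropriate $v_0$ the support of $D_{v_0}$ hits all of $\mathcal F$'' is not something you can run a uniform contradiction argument for over all $v_0 \in X_0$; the choice of $v_0$ must itself be part of the argument, and you never specify it. There is also a technical slip in your construction of $Y$: with the paper's conventions a cut is closed and $\partial Y \subseteq Y$, so a closed connected $Y$ disjoint from $\supp(D_{v_0}) \cup \{v_0\}$ cannot have $\partial Y \subseteq \supp(D_{v_0}) \cup \{v_0\}$ unless $\partial Y = \emptyset$, which would force $Y = \Gamma$.

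The paper's proof is genuinely different and sidesteps the issue above. It does not ask a single reduced divisor to be a hitting set. Instead it replaces $D$ by the effective divisor in $|D|$ that \emph{maximizes} the number of bramble elements whose intersection with the support is non-empty. If some $X \in \mathcal F$ is still missed, one takes $v \in X$, writes $D_v = D + \div(f)$, and considers the path $D_t = D + \div(\max(f,t))$ from $D_v$ to $D$ inside $|D|$. At the last time $h$ for which $\supp(D_h)$ meets $X$, the maximality of $D$ forces $\supp(D_h)$ to miss some $Y \in \mathcal F$ that $D$ does hit. A level-set analysis (the paper's Claims (1)--(3), using $\partial f^{-1}(t) \subseteq \supp(D_t)$) then shows $X \cap \Gamma_h \subseteq f^{-1}(h)$ while $Y \subseteq \Gamma_h \setminus f^{-1}(h)$, hence $X \cap Y = \emptyset$, contradicting the bramble axiom. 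This ``deform the divisor in the linear system and watch which bramble elements drop out'' mechanism --- rather than the reduced-divisor/Dhar burning argument you sketch --- is what makes the proof go through, and it is what you would need to supply to close the gap.
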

\begin{remark}\rm
  In the next section, we will provide a link between topological brambles and 
  a special kind of brambles in finite simple graph models of $\Gamma$, called strong brambles. In view of this link, this
  theorem can be seen as a generalization of a theorem of van Dobben de Bruyn~\cite{DB} to metric graphs.
\end{remark}

\begin{proof}
  In order to prove the theorem, we need to show that there cannot exist any divisor of degree $k$ 
  and rank at least one in $\Gamma$ provided that there exists a top-bramble $\mathcal F$ in $\Gamma$ of order $k+1$. For the sake of a contradiction, let $\mathcal F$ be a top-bramble of order $k+1$ 
  for $\Gamma$ and assume there exists a divisor $D$ with $\deg(D) = k$ and $r(D)\geq 1$. 
  In particular, for any point $x$ of $\Gamma$, the reduced divisor $D_x$ has $x$ in its support. 
  
  Consider the linear system $|D|$ of $D$.
  Without loss of generality, and by replacing $D$ by another divisor $E \in |D|$ if necessary, 
  we can assume that $D$ is effective, and in addition, that $D$ is the divisor in the linear system 
  $|D|$ whose support $\mathrm{supp}(D)$ 
  has the maximum number of non-empty intersections $\mathrm{supp}(D) \cap X$ with elements $X$ in $\mathcal F$. 
  
  Since $\mathrm{ord}(\mathcal F)= k+1 >  |\mathrm{supp}(D)|$, there exists an element 
  $X$ in $\mathcal F$ such that $X \cap \,\mathrm{supp}(D) = \emptyset$. Let $v$ be an arbitrary
  point of $X$, and consider the unique $v$-reduced divisor $D_v \sim D$. Let $f$ be a rational function on 
  $\Gamma$ which gives $\div(f) + D= D_v$. Since $r(D) \geq 1$, we have $v\in \mathrm{supp}(D_v)$, while by the choice of $X$, 
  we have $v\notin \mathrm{supp}(D)$.
  
  The structure of the proof is as follows: we consider a specific path $D_t$ in $|D|$ (parameterized by $t$) from $D_v$ to $D$. Since $\mathrm{supp}(D_v) \cap X \neq \emptyset$ while $\mathrm{supp}(D) \cap X = \emptyset$, by compactness of $X$, there exists a maximum value $h$ of $t$ such that $\mathrm{supp}(D_h) \cap X \neq \emptyset$. Denote by $X_1, \dots, X_n$ all the different elements of $\mathcal F$ which have non-empty intersections with $\mathrm{supp}(D)$. Note that the support of   $D_h$ intersects $X$, and $X$ is not among the $X_i$'s, so by the choice of $D$, as the one maximizing the number of non-empty intersections with elements in $\mathcal F$, there should exist an element $Y=X_i$ such that $Y \cap \mathrm{supp}(D_h) = \emptyset$. We will show that $Y\cap X=\emptyset$, which is in contradiction with the definition of a topological bramble.   
  
  \medskip
  
   For any real number $t$, define a function $f_t$ on $\Gamma$ by $f_t(x):= \max(f(x),t)$ for any $x\in \Gamma$. Since both $f$, and the constant function $t$ lie in $R(D)$ and $R(D)$ is a tropical semi-module it follows that $f_t\in R(D)$ for all $t$. In other words, the divisor $D_t:=D+\div(f_t)$ is in $|D|$. Denote now by $\min f$ and $\max f$ the minimum and maximum value of $f$ on $\Gamma$ respectively. The map $[\min f,\max f]\to |D|, t\to D+\div(f_t)$ defines a path in $|D|$ from $D_v$ to $D$, since $f_{\min f}=f$ and $f_{\max f}$ is the constant function $\max f$. Next, we prove the following claim.
   
  \begin{claim*}[1]For any real number $t$ denote by $f^{-1}(t) = \{x \in \Gamma\,|\, f(x) =t\}$ the level set at $t$. We have $\partial f^{-1}(t) \subseteq \mathrm{supp}(D_t)$.
  \end{claim*}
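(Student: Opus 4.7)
The approach is to compare the outgoing slopes of $f_t=\max(f,t)$ with those of $f$ at points of the level set $f^{-1}(t)$, and then to use the fact that at a topological boundary point of $f^{-1}(t)$ the function $f$ must have a nonzero outgoing slope in at least one tangent direction.

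First I would establish the pointwise slope formula: for any $x\in f^{-1}(t)$ and any $w\in T_x(\Gamma)$, one has $d_w f_t(x)=\max(d_w f(x),0)$. Indeed, since $f$ is piecewise linear, on a small initial segment in direction $w$ we have $f(x+sw)=t+s\,d_w f(x)$ for all sufficiently small $s>0$, so $f_t(x+sw)=\max(t+s\,d_w f(x),t)=t+s\,\max(d_w f(x),0)$, and the formula follows by taking the one-sided limit. Summing over $w\in T_x(\Gamma)$ gives $\ord_x(f_t)=\sum_{w\in T_x(\Gamma)}\max(d_w f(x),0)\geq 0$.

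Next I would observe that if $x\in\partial f^{-1}(t)$ then at least one tangent direction $w$ must satisfy $d_w f(x)\neq 0$; otherwise, by piecewise linearity, $f$ would be identically equal to $t$ on a small neighborhood of $x$, putting $x$ in the interior of $f^{-1}(t)$. The argument then splits into two subcases according to the sign distribution of the outgoing slopes of $f$ at $x$. If some $d_w f(x)>0$, the slope formula gives $\ord_x(f_t)\geq d_w f(x)>0$, and effectivity of $D$ yields $D_t(x)=D(x)+\ord_x(f_t)>0$. Otherwise all outgoing slopes are $\leq 0$ with at least one strictly negative, so $\ord_x(f_t)=0$ while $\ord_x(f)<0$; since $D_v=D+\div(f)$ is effective (the $v$-reduced representative of a class of nonnegative rank, as recalled in the introduction), we get $D(x)\geq -\ord_x(f)>0$, and hence $D_t(x)=D(x)>0$. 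In either subcase $x\in\mathrm{supp}(D_t)$, completing the proof.

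The real content is the slope formula $d_w f_t(x)=\max(d_w f(x),0)$, which is essentially the statement that tropical maxing with a constant clips negative initial slopes to zero; the rest is bookkeeping. I expect the main obstacle to be writing this computation cleanly at boundary points, where the behavior of $f$ in different tangent directions can be asymmetric (positive, zero, and negative outgoing slopes can all coexist at the same point), so one must be careful to phrase the slope formula directionwise and then sum.
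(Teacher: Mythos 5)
Your proof is correct, and it reaches the same conclusion by a slightly more elementary route than the paper. The paper introduces the upper level set $\Gamma_t=\{f\geq t\}$, factors $f=(f-f_t)+f_t$ to show $f-f_t\in R(D_t)$, and splits into the cases $x\in\partial\Gamma_t$ (where $f-f_t$ has only nonpositive outgoing slopes at $x$ with at least one strictly negative, so $D_t(x)\geq -\ord_x(f-f_t)>0$) and $x\notin\partial\Gamma_t$ (where $f_t$ has a strict local minimum at $x$, so $\ord_x(f_t)>0$ and effectivity of $D$ finishes). You instead work directly with the pointwise clipping identity $d_wf_t(x)=\max(d_wf(x),0)$ and split on whether some outgoing slope of $f$ at $x$ is positive: if yes, $\ord_x(f_t)>0$ and $D(x)\geq 0$ gives $D_t(x)>0$; if all slopes are $\leq 0$ with one strictly negative, $\ord_x(f_t)=0$ and effectivity of $D_v=D+\div(f)$ forces $D(x)>0$. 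The two case splits are mirror images, and both ultimately rest on effectivity of $D$ and of $D_v$; your version is more transparent in that it makes the slope formula explicit and avoids passing through the auxiliary membership $f-f_t\in R(D_t)$, whereas the paper's packaging via $R(D_t)$ is a little slicker notationally once one is used to the tropical semi-module formalism.
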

  
 \noindent \emph{Proof of Claim (1): } Define the upper level set $\Gamma_t := \{x\in \Gamma\,|\, f(x) \geq t\}$. 
 First note that writing $f = (f-f_t) + f_t$, we have $D_v = D+\div(f) = \div(f-f_t) + \div(f_t)+D = 
 \div(f-f_t) +D_t$, which shows that $f-f_t \in R(D_t)$. 
 Note also that  $f-f_t$ is constant on $\Gamma_t$ and coincides with $f-t$ outside $\Gamma_t$. Consider now a point $x\in \partial f^{-1}(t)$, in other words, $f$ does not restrict to a constant function on any neighborhood of $x$. If $x \in \partial \Gamma_t$ since $f$ (and so $f-f_t$) is strictly decreasing along any out-going branch $e$ 
 from $\Gamma_t$ at $x$, the slope of $f-f_t$ at $x$ along $e$ is strictly negative. Since $f-f_t \in R(D_t)$, this shows that 
  $x \in \mathrm{supp}(D_t)$, and the claim follows. If $x \not \in \partial \Gamma_t$, since $f$ is not a constant function locally at $x$, 
  then $f$ is strictly increasing along one of the branches at $x$, and so again, since $f_t$ takes its minimum value at $x$, $D$ is effective, and
  $D_t =D + \div(f_t)$, we conclude that $x\in \mathrm{supp}(D_t)$, and the claim follows.  
  
  \medskip
    Consider now  
  the path in $|D|$ defined by all the divisors $D_t = D + \div(f_t)$, for the values of $t\in [\min f, \max f]$.  Let $h$ be the maximum value in $ [\min f, \max f]$ such that  $\mathrm{supp}(D_h) \cap X \neq \emptyset$.  
  
  \begin{claim*}[2]
 We have  $X\cap\Gamma_h\subset f^{-1}(h)$.  
  \end{claim*}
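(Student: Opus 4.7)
My plan is to argue by contradiction: assume there exists $x\in X$ with $f(x)>h$, and derive a contradiction with the maximality of $h$. Set $m:=\min_{y\in X} f(y)$ and $M:=\max_{y\in X} f(y)$; the standing assumption forces $M>h$.

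I would first treat the generic case $m<M$, where $f$ is non-constant on $X$. Pick any $t$ with $\max(h,m)<t<M$, so that $f|_X$ takes values both strictly above and strictly below $t$. Because $A:=X\cap\{f>t\}$ is a non-empty proper open subset of the connected compact set $X$, it has non-empty boundary in $X$; any $z$ in this boundary satisfies $f(z)=t$ and is simultaneously an accumulation point in $\Gamma$ of $\{f>t\}$, hence lies in $\partial f^{-1}(t)$. Claim~(1) then yields $z\in \mathrm{supp}(D_t)\cap X$, which together with $t>h$ contradicts the maximality of $h$.

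The remaining degenerate case is $m=M=c$ with $c>h$, so $f\equiv c$ on $X$; here the connectedness argument is unavailable and a separate treatment is needed. If some $y\in X$ lies on $\partial f^{-1}(c)$, Claim~(1) directly produces $y\in \mathrm{supp}(D_c)\cap X$, and $c>h$ again contradicts the maximality of $h$. Otherwise $X\subseteq \mathrm{int}(f^{-1}(c))$; by compactness of $X$ one may cover it by finitely many open neighborhoods on which $f\equiv c$, obtaining an open neighborhood $U$ of $X$ in $\Gamma$ with $f\equiv c$ on $U$. Since $h<c$, the function $f_h=\max(f,h)$ is identically $c$ on $U$, hence $\div(f_h)$ vanishes on $U$ and $D_h$ coincides with $D$ on $U$. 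But $\mathrm{supp}(D)\cap X=\emptyset$ by the earlier choice of $D$, so $\mathrm{supp}(D_h)\cap X=\emptyset$, contradicting the very definition of $h$.

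The main obstacle I anticipate is exactly this degenerate subcase, where the intermediate-value flavour of the argument breaks down and one must instead analyze the local form of $D_h=D+\div(f_h)$ on an open set where $f$ is locally constant; everything else is a clean application of connectedness together with Claim~(1).
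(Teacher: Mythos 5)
Your proof is correct, and it takes a route that is close in spirit to the paper's but organised differently. The paper invokes~\cite[Lemma 7]{A} to conclude that $f$ attains its global minimum at $v\in X$, then follows a path inside $X$ from $v$ to a maximiser of $f|_X$ to produce a point of $\partial f^{-1}(\max f|_X)\cap X$, and applies Claim~(1); you instead argue by contradiction with an intermediate level $t\in\bigl(\max(h,\min f|_X),\,\max f|_X\bigr)$, using only the connectedness of $X$ to produce a boundary point of $X\cap\{f>t\}$ lying in $\partial f^{-1}(t)\cap X$, so the minimum-at-$v$ lemma is never needed. Both arguments ultimately rest on Claim~(1) together with connectedness of $X$. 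A genuine improvement in your write-up is that you explicitly isolate the degenerate case where $f$ is constant on $X$ (say $\equiv c$), a case in which the paper's path argument as literally written is vacuous (the conclusion $h\geq\max f|_X$ still holds there because $D_{\min f}=D_v$ already has $v\in X$ in its support, but the text does not point this out); your treatment via local constancy of $f_h$ near $X$, forcing $D_h$ to agree with $D$ on a neighbourhood of $X$ and hence $\mathrm{supp}(D_h)\cap X=\emptyset$, settles it cleanly. The trade-off is small: your version is a bit more elementary and more careful with the edge case, while the paper's version also records the exact value $h=\max f|_X$ (which, as it happens, is not needed for Claim~(2) itself).
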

  
\noindent  \emph{Proof of Claim (2):}  
 Since $-f\in R(D_v)$ it follows from~\cite[Lemma 7]{A} that the minimum of $f$ is taken at $v$. Hence, any path $P$ in $X$ from $v$ to a point in $f^{-1}(\max f_{|X}) \cap X$ intersects $\partial f^{-1}(\max f_{|X})$, which combined
 with Claim $(1)$, implies that $\mathrm{supp}(D_{\max f_{|X}})$ intersects $X$. On the other hand, for any $t > \max f_{|X}$, since $\mathrm{supp}(D_t) \subseteq \Gamma_t$, and $\Gamma_t\cap X = \emptyset$, the intersection
  $\mathrm{supp}(D_t) \cap X$ is empty. The claim follows.

 \medskip
 
Let now  $Y$ be an element of $\mathcal F$ with the property that $Y \cap \mathrm{supp}(D_h) = \emptyset$ while $Y \cap \mathrm{supp}(D)\neq \emptyset$. The following claim will imply that $Y\cap X=\emptyset$, which is in contradiction with the definition of a topological bramble,   and our theorem follows.

 \begin{claim*}[3]
 We have $Y \subseteq \Gamma_h\setminus f^{-1}(h)$, in other words, $f_{|Y} > h$.   
 \end{claim*}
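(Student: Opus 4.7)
The plan is to use Claim~(1) together with the assumption $Y\cap\supp(D_h)=\emptyset$ as a ``forbidden location'' statement for points of $Y$, and then propagate it to all of $Y$ via connectedness, exploiting the given fact that $Y\cap\supp(D)\neq\emptyset$.

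First I would verify that $f(y)>h$ for every $y\in Y\cap\supp(D)$. Since $D$ is effective, any such $y$ has $D(y)>0$; but $y\in Y$ forces $D_h(y)=0$, so $\ord_y(f_h)=-D(y)<0$. Assume for contradiction that $f(y)\leq h$. If $f(y)<h$, then $f$ stays strictly below $h$ in a neighborhood of $y$, so $f_h\equiv h$ there and $\ord_y(f_h)=0$, a contradiction. If $f(y)=h$ and $y\in\partial f^{-1}(h)$, Claim~(1) places $y$ in $\supp(D_h)\cap Y=\emptyset$, a contradiction. If $f(y)=h$ and $y\notin\partial f^{-1}(h)$, then $f\equiv h$ on a neighborhood of $y$, hence $f_h\equiv h$ there and again $\ord_y(f_h)=0$. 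Only $f(y)>h$ survives.

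Next I would show that $Z:=Y\cap f^{-1}(h)$ is empty. Continuity of $f$ makes $Z$ closed in $Y$. For openness, pick $z\in Z$; since $\partial f^{-1}(h)\subseteq\supp(D_h)$ by Claim~(1) and $Y\cap\supp(D_h)=\emptyset$, the point $z$ cannot lie in $\partial f^{-1}(h)$, so $f\equiv h$ on some $\Gamma$-neighborhood of $z$, whose intersection with $Y$ lies in $Z$. Hence $Z$ is clopen in the connected space $Y$, so $Z=\emptyset$ or $Z=Y$. The possibility $Z=Y$ would force $f\equiv h$ on $Y$, contradicting the previous paragraph applied to any $y\in Y\cap\supp(D)\neq\emptyset$; thus $Z=\emptyset$.

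It remains to combine the two pieces. Since $f$ never equals $h$ on $Y$, the sets $\{y\in Y:f(y)>h\}$ and $\{y\in Y:f(y)<h\}$ are disjoint open subsets covering $Y$; connectedness forces one to be empty, and the first step supplies a point of $\{f>h\}\cap Y$, so $f>h$ on all of $Y$, which is exactly $Y\subseteq\Gamma_h\setminus f^{-1}(h)$. The main subtlety I expect is the case $f(y)=h$ with $y$ in the interior of $f^{-1}(h)$: there $f_h$ is locally constant and contributes nothing to $\ord_y(f_h)$, so one cannot conclude anything from the rational function $f_h$ alone and must genuinely combine the positivity of $D(y)$ with the assumption $Y\cap\supp(D_h)=\emptyset$ to rule it out.
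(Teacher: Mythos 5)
Your proof is correct and relies on exactly the same ingredients as the paper's: Claim~(1), the observation that a point $y\in Y\cap\supp(D)$ (which has $D(y)>0$ yet $D_h(y)=0$, hence $\ord_y(f_h)<0$) cannot satisfy $f(y)\leq h$, and connectivity of $Y$. The only difference is organizational: the paper propagates the conclusion via explicit paths inside $Y$ (showing successively $Y\cap\Gamma_h\neq\emptyset$, then $Y\subseteq\Gamma_h$, then $Y\cap f^{-1}(h)=\emptyset$), whereas you package the same connectivity argument as a clopen-set argument for $Y\cap f^{-1}(h)$ followed by a sign-separation step, which is a cleaner presentation of the same idea.
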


  We make the following observation which will be used in the proof of the above claim. 
    
    \medskip
    
 \noindent \emph{Observation.}   Let $y$ be a point in the intersection $Y \cap\, \mathrm{supp}(D)$ (which is by assumption non-empty). Since $y \not \in \mathrm{supp}(D_h)$, the point $y$ is not a local minimum of $f_h$, i.e., all the slopes of $f_h$ along the adjacent 
 branches at $y$ cannot be non-negative. In particular, $Y$ cannot be entirely contained in $f^{-1}(h)$.
 \medskip

\noindent \emph{Proof of Claim (3):} First, note that $Y \cap \Gamma_h \neq \emptyset$: indeed, otherwise, this would mean that the restriction 
  $f|_Y < h$, and so any point of $Y$ would be a local minimum for $f_h$, contradicting the above observation.
 Second, we show that $Y \subset \Gamma_h$, i.e., $\min f_{|Y} \geq h$. Otherwise, there would exist a point $z \in Y$ 
 such that $f(z) <h$. By connectivity of $Y$, and by taking a path $P \subset Y$ from $z$ to a point in 
 $\Gamma_h \cap Y$, we would obtain that $\partial f^{-1}(h) \cap Y \neq \emptyset$, which by Claim (2)
 would lead to $\mathrm{supp}(D_h) \cap Y \neq \emptyset$, contradicting the choice of $Y$. 
 Combining this with the above observation, since $Y$ is not entirely contained in $f^{-1}(h)$, we infer that $Y \cap (\Gamma_h \setminus f^{-1}(h)) \neq \emptyset$. We finish the proof of Claim (3) by showing that $Y\cap f^{-1}(h) = \emptyset$: suppose there is an element in $Y\cap f^{-1}(h)$, then, by connectivity of $Y$, there would exist a path $P$ in $Y$ from that element to a point in $Y \cap (\Gamma_h \setminus f^{-1}(h))$. This path would contain a point in $\partial f^{-1}(h)$, which by Claim (2) would imply that  $\mathrm{supp}(D_h) \cap Y \neq \emptyset$, contradicting again the choice of $Y$. This finishes the proof of the claim, and our theorem follows.
 \end{proof}

\section{Topological brambles in metric graphs vs strong brambles in graphs}
Let $G=(V,E)$ be a finite simple graph on vertex set $V$ and with edge set $E$. 
 We gave the definition of a bramble in Section~\ref{intro:tw}, and mentioned that it provides a dual notion for 
 tree-width. A strong bramble is a specific kind of bramble in a finite graph defined as follows:
 \begin{defi}[Strong bramble]\rm
 A strong bramble in $G$ is a finite collection $\mathcal F$ of connected subsets of $G$ such that for any two elements 
 $B$ and $C$, one has $B \cap C \neq \emptyset$. 
 \end{defi}

 Note in particular that for any two elements $B$ and $C$ in $\mathcal F$, the union $B \cup C$ is obviously connected. 
 In other words, a strong bramble is a bramble. 
 
 The order of a strong bramble is its order as a bramble, i.e., the minimum size of a hitting set for $\mathcal F$ in $V$. The strong bramble number of a finite graph $G$, 
 denoted by $sbn(G)$, is the maximum order of any strong bramble in $G$.
 
 \medskip
 
 Let $\Gamma$ be a metric graph with a simple graph model $G=(V,E)$. For any subset $X$ of $V$, we denote by $G[X]$, 
 resp. $\Gamma[X]$, the subgraph of $G$, resp. the 
  metric subgraph of $\Gamma$, defined by $X$: it contains all the edges of $G$, resp. all the metric edges of $\Gamma$, 
  which connect a vertex in $X$ to another vertex of $X$.

  \medskip

The link between strong and topological brambles is provided in the following proposition.
  \begin{prop}\label{prop:link}
   \begin{itemize}
    \item[(1)] Let $\Gamma$ be a metric graph with a simple graph model $G=(V,E)$. For any strong bramble 
    $\mathcal F$ in 
    $G$, the collection $\Gamma[\mathcal F] = \{\Gamma[X]\, | \, X\in \mathcal F\}$ is a topological 
    bramble for $\Gamma$ of the same order.
    \item[(2)] For any topological bramble $\mathcal F_\Gamma$ for $\Gamma$, there exists a simple graph model $G$ 
    of $\Gamma$ and a strong bramble $\mathcal F$ for $G$ such that 
    $\mathcal F_\Gamma = \mathcal F[\Gamma]$.
    \item[(3)] The topological bramble number of $\Gamma$ is given by $\sup_{G} sbn(G)$ where the supremum is over 
    all simple graph models $G$ of $\Gamma$ and $sbn(G)$ is the strong bramble number of $G$.
   \end{itemize}
In addition, the topological bramble number of any metric graph is finite, in other words, the supremum is a maximum. 
  \end{prop}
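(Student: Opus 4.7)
The plan is to handle the three items in order, with (1) a direct verification, (2) the main constructive step, and (3) a formal consequence together with a tree-width bound for finiteness.

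\smallskip

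For (1), given a strong bramble $\mathcal F$ in a simple model $G$, each $X \in \mathcal F$ is connected in $G$, so $\Gamma[X]$ is a non-empty closed connected metric subgraph of $\Gamma$; and any vertex in $X \cap Y$ is a point of $\Gamma[X] \cap \Gamma[Y]$. Hence $\Gamma[\mathcal F]$ is a topological bramble. For the order equality, any hitting set $S \subset V$ of $\mathcal F$ is trivially a hitting set of $\Gamma[\mathcal F]$, and conversely given a hitting set $S\subset \Gamma$ of $\Gamma[\mathcal F]$, each $p \in S$ lying in the interior of a unique edge $uv$ can be replaced by one endpoint of $uv$ without losing the hitting property, since a non-vertex $p \in \Gamma[X]$ forces both $u,v \in X$.

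\smallskip

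For (2), I build the simple model $G$ of $\Gamma$ by progressively enlarging the vertex set. Start with $V_0$ containing all essential points of $\Gamma$ together with the topological boundary (in $\Gamma$) of every $X \in \mathcal F_\Gamma$; this is a finite set, and in the $V_0$-model every edge $e$ and every $X$ satisfy the dichotomy $\mathrm{int}(e) \subset X$ or $\mathrm{int}(e) \cap X = \emptyset$, because $\mathrm{int}(e)$ contains no boundary point of $X$. Next, I insert a midpoint on every edge that is \emph{bad} for some $X$, meaning an edge with both endpoints in $X$ but with interior disjoint from $X$. Finally, I further subdivide to kill the remaining loops and parallel edges. Call the resulting simple model $G = (V,E)$ and set $V(X) := X \cap V$. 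The design ensures the key property that \emph{whenever both endpoints of an edge of $G$ lie in $V(X)$, the whole edge lies in $X$}, which yields $\Gamma[V(X)] = X$. Connectedness of $V(X)$ in $G$, the pairwise intersection property $V(X) \cap V(Y) \neq \emptyset$ (use any intersection point and, if it is interior to an edge, replace it by that edge's endpoints), and the order equality (same point-to-endpoint trick as in (1)) then follow.

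\smallskip

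The main obstacle is checking that the midpoint additions in Part (2) do not create new bad configurations for other elements of $\mathcal F_\Gamma$. The crucial observation is that a new midpoint $m$ on an edge $uv$ belongs to $X' \in \mathcal F_\Gamma$ if and only if $\mathrm{int}(uv) \subset X'$; consequently the dichotomy is preserved on the two new sub-edges $um$ and $mv$, and neither of them is bad for any $X'$ (if $u,m \in V(X')$ then necessarily $\mathrm{int}(uv) \subset X'$, so $um \subset X'$).

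\smallskip

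For (3), (1) gives $\sup_G sbn(G) \leq tbn(\Gamma)$ and (2) gives $tbn(\Gamma) \leq \sup_G sbn(G)$. For the finiteness and the assertion that the supremum is a maximum, I use $sbn(G) \leq bn(G) = tw(G)$ (every strong bramble is a bramble, so this is immediate from the Seymour--Thomas theorem) together with the classical fact that tree-width is invariant under edge subdivision, up to the trivial case $tw \leq 1$. Every simple model of $\Gamma$ is obtained from the minimal multigraph model by a finite sequence of subdivisions, so $tw(G)$ is bounded independently of the simple model; hence $tbn(\Gamma)$ is finite. The maximum is then attained by applying (2) to any topological bramble realizing the value $tbn(\Gamma)$.
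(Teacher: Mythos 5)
Your proof is correct and follows the same overall architecture as the paper's proof, with two notable variations.

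For parts (1) and (2) you essentially replicate the paper's argument: adding the boundaries $\partial X$ to the vertex set to obtain the dichotomy ``$\mathrm{int}(e)\subset X$ or $\mathrm{int}(e)\cap X=\emptyset$'', and then subdividing to kill ``bad'' edges. The paper simply subdivides \emph{every} edge of the enlarged model once, which automatically kills bad edges and keeps the model simple in one stroke; you instead subdivide only the bad edges and then subdivide further to restore simplicity, which forces you to verify that the dichotomy (hence non-badness) is preserved under further subdivision. That verification is correct, but the paper's uniform subdivision avoids it. One small omission in your (2): starting from ``all essential points of $\Gamma$ together with the boundary points'' can yield an empty or too-small initial vertex set (e.g.\ when $\Gamma$ is a circle and all $X\in\mathcal F_\Gamma$ have empty boundary), so one should instead start from an arbitrary simple graph model $G_0$ and enlarge it, as the paper does; this is a trivial fix and does not affect the rest of the argument.

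For part (3), your finiteness argument is genuinely different from the paper's. The paper deduces finiteness of $tbn(\Gamma)$ from the already-proved inequality $tbn(\Gamma)\le\gamma_{\div}(\Gamma)$ (Theorem~\ref{thm:divgontopbram}) together with the finiteness of divisorial gonality (Brill--Noether/Riemann--Roch). You instead give a purely combinatorial argument via $sbn(G)\le bn(G)=tw(G)$ and subdivision-invariance of tree-width, which is self-contained and does not rely on divisor theory. That is a nice alternative, but as stated it has a small gap: you invoke invariance of tree-width under subdivision applied to the \emph{minimal multigraph model}, and tree-width is not invariant under subdivision of multigraphs (a loop at a vertex has tree-width $0$, but subdividing it twice yields a triangle of tree-width $2$). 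The fix is easy: fix one simple model $G_0$ of $\Gamma$; any other simple model $G$ has a common refinement $G'$ with $G_0$ which is again a simple model, and $G'$ is a (graph) subdivision of both $G$ and $G_0$; since subdivision-invariance does hold for simple graphs (modulo the trivial $tw\le 1$ case you already flag), $tw(G)=tw(G')=tw(G_0)$, so $tw$ is the same for all simple models and $\sup_G sbn(G)$ is finite. Once finiteness is established, the supremum is attained exactly as you say.
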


  \begin{proof}
   (1) Let $\mathcal F$ be a strong bramble for a simple graph model $G$ of $\Gamma$. 
   Obviously, for any $X\in \mathcal F$, the subset $\Gamma[X]$ is a connected metric subgraph of $\Gamma$. 
   In addition, for any two elements $X, Y \in \mathcal F$, we have 
   $\Gamma[X]\cap \Gamma[Y] \supset X \cap Y \neq \emptyset$
   which shows that $\Gamma[\mathcal F]$ is a topological bramble for $\Gamma$. To see that 
   $\ord(\Gamma[\mathcal F]) = \ord(\mathcal F)$, note first 
   that since any hitting set for $\mathcal F$ is also a hitting set for 
   $\Gamma[\mathcal F]$, we obviously have $\mathrm{ord}(\Gamma[\mathcal F]) \leq \mathrm{ord}(\mathcal F)$. 
   To prove the equality, it will be clearly enough to show the existence of a minimum size hitting set 
   $S$ for $\Gamma[\mathcal F]$ such that $S \subset V(G)$ ($S$ will be then a hitting set for $\mathcal F$). 
   Let $S$ be a hitting set of minimum size for $\Gamma[F]$. For any point $x$ in $S \setminus V$ which lies in the 
   interior of an edge $e_x$ of $G$, chose one of the two extremities $v_x$ of $e_x$ and replace $x$ with $v_x$ 
   to obtain a set $\tilde S$ of the same size $|S|$. Note that 
   a metric subgraph of the form $\Gamma[X]$ in $\Gamma[\mathcal F]$ which contains the point $x \in S \setminus V$ 
   contains both the end-points of the edge $e_x$, and has non-empty intersection with $\tilde S$. An element of 
   $\Gamma[\mathcal F]$ which intersects $S \cap V$ has also non-empty intersection with $\tilde S$. It follows 
   that $\tilde S$ is a hitting set for 
   $\Gamma[\mathcal F]$, and the claim follows. 
 \medskip
 
 \noindent (2) Let $\mathcal F_\Gamma$ be a topological bramble for $\Gamma$, and let $G_0 = (V_0, E_0)$ be 
 a simple graph model for $\Gamma$. Consider the set $\cup_{X \in \mathcal F_\Gamma}\partial X$ of all points of $\Gamma$ which lie on the
 boundary of a set $X \in  \mathcal F_\Gamma$ and define $V_1 =  V_0 \cup \cup_{X \in \mathcal F_\Gamma}\partial X$. Consider the model $G_1 = (V_1,E_1)$ of $\Gamma$ defined by $V_1$. Finally, 
 subdivide each edge $e$ of $G_1$ by adding a new vertex in the middle of $e$ to obtain a model $G = (V, E)$ of $\Gamma$. 
 Let $\mathcal F = \{V \cap X\,|\, X \in \mathcal F_\Gamma\}$. It is not hard to see that $\mathcal F$ is a strong bramble and $\Gamma[\mathcal F] = \mathcal F_\Gamma$.  
 
 \medskip
 
 \noindent (3) The equality of the topological bramble number and the supremum $\sup_{G} sbn(G)$, 
 for $G$ a model of $\Gamma$, formally follows from the two assertions 
 (1) and (2). Finiteness of the topological bramble number of a metric graph is  a consequence of 
 Theorem~\ref{thm:divgontopbram}, since by Brill-Noether bound (or simply Riemann-Roch) for metric graphs, 
 the divisorial gonality of any metric graph is finite.
 \end{proof}

 \section{Strong brambles and weak tree-decompositions}
 In this section, we provide the dual notion to strong brambles: we introduce a new class of graph 
 decompositions that we call 
 weak tree-decompositions. Once this has been discovered, it is not difficult to show that 
 strong brambles of given order are the dual obstructions for the existence of weak tree-decompositions of given order, 
 see Theorem~\ref{thm:stbramwt} 
 below for a precise formulation.

 \medskip
 
  Let $G=(V,E)$ be a connected graph. A
weak tree-decomposition of $G$ is
a pair $(T,{\mathcal S})$ where $T$ is a finite tree on a set of nodes $I$, and ${\mathcal S}=\{S_i : i\in I\}$ 
is a collection of subsets of
$V$, subject to the following three conditions: 
\begin{enumerate}
\item $\cup_{i\in
  I}S_i = V$, 
  \item for any edge $e$ in $G$, there is an edge $\{i,j\}$ in $T$ such that  $ e\subset S_i \cup S_j$.
  \item for any vertex $v$ in $G$, the set of nodes $i$ of $T$ with $v \in S_i$ form a connected subtree of $T$.
\end{enumerate}
For any vertex $v \in V$, we denote by $T_v$ the (connected) subtree of 
 $T$ which is induced by all the nodes $i$ of $T$ with $v \in S_i$.
 
Note that the only difference with the usual definition of a tree-decomposition is in point (2) where we impose a weaker
 condition. In particular, it might happen that an edge $e$ of $G$ is not necessarily contained in any set 
 $S_i \in \mathcal X$.

The width of a weak tree-decomposition $(T,{\mathcal S})$ is defined as $w(T,{\mathcal
  S})=\max_{i\in I}|S_i|$. The weak tree-width of $G$, denoted by $wtw(G)$,  is the minimum width of any weak 
  tree-decomposition of $G$.

 \begin{lemma}\label{lem:con} Let $(T, \mathcal S)$ be a weak tree-decomposition of a graph $G=(V,E)$. 
 For any two adjacent vertices $u$ and $v$, 
 $T_u \cup T_v$ is connected. In particular, for any connected subset $X$ of $G$, the union $\cup_{v\in X} T_v$ 
 is a connected subtree of $T$.
 \end{lemma}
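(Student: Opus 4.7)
The plan is to use the defining properties of weak tree-decomposition directly, exploiting the edge condition (2) together with the subtree condition (3). The key observation is that condition (2) is still strong enough to force, for any edge $\{u,v\}$ of $G$, a concrete meeting point between the subtrees $T_u$ and $T_v$ inside $T$.

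First I would prove the ``in particular'' statement follows from the first claim by a straightforward induction/chaining argument: if $X\subset V$ is connected, then for any two $u,w\in X$ there is a path $u=v_0,v_1,\ldots,v_k=w$ in $G$ with all $v_i\in X$, and applying the adjacent case to each pair $(v_i,v_{i+1})$ shows that $T_{v_i}\cup T_{v_{i+1}}$ is connected. Chaining these overlapping connected pieces gives that $\bigcup_i T_{v_i}$, and therefore $\bigcup_{v\in X}T_v$, is a connected subgraph of the tree $T$; any connected subgraph of a tree is itself a subtree.

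So the real content is the adjacent case. Given an edge $e=\{u,v\}$ of $G$, apply axiom (2) of weak tree-decomposition to obtain an edge $\{i,j\}$ of $T$ with $\{u,v\}\subset S_i\cup S_j$. Up to swapping $i$ and $j$, there are two cases to consider. In the easy case both $u$ and $v$ lie in a common bag $S_i$ (or $S_j$), so $i$ (resp.\ $j$) is a node of $T_u\cap T_v$; since both $T_u$ and $T_v$ are connected subtrees by condition (3) and they share a node, their union is connected. In the remaining case (after relabelling) $u\in S_i\setminus S_j$ and $v\in S_j\setminus S_i$, so $i\in V(T_u)$ and $j\in V(T_v)$; since $\{i,j\}$ is an edge of $T$ and both endpoints lie in $V(T_u)\cup V(T_v)$, the induced subgraph on this vertex set contains that edge and thus glues the two connected subtrees $T_u$ and $T_v$ into a single connected subgraph.

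I do not foresee a real obstacle here; the only care needed is interpreting ``$T_u\cup T_v$'' as the subgraph of $T$ induced on $V(T_u)\cup V(T_v)$ (the natural reading, since the lemma asserts connectivity), and double-checking that the weak axiom (2) truly gives both $u$ and $v$ simultaneously inside $S_i\cup S_j$ so that the edge $\{i,j\}$ is captured in this induced subgraph. Once that is unpacked, the proof is essentially a two-line case analysis followed by a one-paragraph chaining argument, and the finite length of the path in $X$ avoids any subtlety with infinite unions.
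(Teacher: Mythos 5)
Your proof is correct and follows essentially the same route as the paper: both invoke condition (2) to produce an edge $\{i,j\}$ of $T$ with $\{u,v\}\subset S_i\cup S_j$, then observe that either $T_u$ and $T_v$ share a node or else $i\in T_u$, $j\in T_v$ are joined by the tree edge $\{i,j\}$; your chaining argument for the ``in particular'' part matches the paper's (more compressed) appeal to connectivity of $X$.
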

 \begin{proof} By property (2) of a weak tree-decomposition, the edge $\{u,v\}$ is contained in the union of 
 two sets $S_i$ and $S_j$ in $\mathcal S$ for an edge  
 $\{i,j\}$ of $T$. This means that  either $T_u \cap T_v \neq \emptyset$ or $i$ and $j$ do not belong to the same tree 
 among $T_u$ and $T_v$. In any of the two cases, $T_u \cup T_v$ is connected. 
 The second statement obviously follows by connectivity of $X$ and the first assertion.
 \end{proof}

The following proposition is straightforward from the definition.
\begin{prop}\label{prop:res}
\begin{itemize}
 \item Let $(T, \mathcal S)$ be a weak tree-decomposition of a graph $G=(V,E)$, and let 
$U$ be a subset of $V$. 
The restriction of $(T, \mathcal S)$ to $U$ defined by replacing any $S_i$ in the decomposition with $S_i\cap U$ is a weak tree-decomposition 
of $G[U]$.

\item Let $(T, \mathcal S)$ be a weak tree-decomposition of a graph $G=(V,E)$, and $i$ and $t$ two nodes of $T$. 
Let $v\in S_t$, and for any node $j$ on the unique path between $i$ and $t$, define $S'_j =S_j \cup \{v\}$. 
For all the other nodes $j$ of $T$, define $S'_j = S_j$. Then $(T, \mathcal S'=\{S'_j\})$ is a weak tree-decomposition of $G$.
\end{itemize}
 \end{prop}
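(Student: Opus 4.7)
The plan is to verify the three defining axioms of a weak tree-decomposition for each of the two constructions. Both proofs are essentially bookkeeping, so I would just run through the conditions in order, flagging the only place where any nontrivial connectivity argument is needed.

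For the first statement, I would fix $U \subseteq V$ and set $S_i' = S_i \cap U$. Axiom (1) follows from $\bigcup_i S_i' = (\bigcup_i S_i) \cap U = V \cap U = U$. For axiom (2), any edge $e$ of $G[U]$ is an edge of $G$ with both endpoints in $U$, so there is an edge $\{i,j\}$ of $T$ with $e \subseteq S_i \cup S_j$; intersecting with $U$ gives $e = e \cap U \subseteq S_i' \cup S_j'$. For axiom (3), for $v \in U$ the set $\{i : v \in S_i'\}$ equals $\{i : v \in S_i\}$, which is a connected subtree by hypothesis. (For $v \notin U$ there is nothing to check, since such $v$ do not belong to $G[U]$.)

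For the second statement, set $P$ to be the set of nodes on the unique path from $i$ to $t$ in $T$, and $S_j' = S_j \cup \{v\}$ for $j \in P$, $S_j' = S_j$ otherwise. Axioms (1) and (2) are immediate because $S_j \subseteq S_j'$ for every $j$, so the coverings of vertices and edges are preserved. The only real content is axiom (3). For any vertex $u \neq v$, membership of $u$ in $S_j'$ is the same as in $S_j$, so the corresponding subtree is unchanged and hence connected. The one step to handle carefully is $u = v$: here $\{j : v \in S_j'\} = T_v \cup P$, where $T_v = \{j : v \in S_j\}$ is a connected subtree by hypothesis, and $P$ is a path, hence also a connected subtree. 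Since $v \in S_t$ we have $t \in T_v \cap P$, so the union is connected. This is the only place where anything beyond a trivial set-theoretic manipulation is needed, and it is precisely why the augmentation is performed along a \emph{path} to a node $t$ with $v \in S_t$ rather than along an arbitrary subtree.

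I do not anticipate any real obstacle; the proposition is a direct check, and its value lies not in its proof but in the two flexibility operations it supplies for manipulating weak tree-decompositions (restriction to a vertex subset, and extension of a bag's content along a path), which presumably will be used in the sequel to pass between weak tree-decompositions of different simple models of $\Gamma$ and to align them with the combinatorics of strong brambles.
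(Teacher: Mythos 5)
Your proof is correct and takes essentially the same route as the paper's: the paper declares the first bullet ``obvious from the definition'' (you simply spell out the axiom checks), and for the second bullet the paper likewise reduces to checking property (3) only for the vertex $v$, observing that $T'_v = T_v \cup P$ is connected because $t \in T_v$.
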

 \begin{proof}
  The first assertion is obvious from the definition. For the second one, we only need to verify the property (3) for the vertex $v$. The tree 
  $T'_v$ associated to $(T, \mathcal S')$ is the union of $T_v$ and the unique path in $T$ between $i$ and $t$. Since $t$ also belongs to $T_v$, $T'_v$ is connected. 
 \end{proof}

 The following theorem is a duality theorem, in the spirit of the duality theorems in graph minor theory, 
 which relates strong brambles to weak tree-decompositions. It does not seem to follow from the generalized forms of duality established in~\cite{AMNT, DO}, so we provide a proof. 
 \begin{thm}\label{thm:stbramwt} A finite graph $G$ has a weak tree-decomposition of width $k$ 
 if and only if there is no strong bramble of order strictly larger than $k$ in $G$. In other words, $wtw(G) = sbn(G)$.
 \end{thm}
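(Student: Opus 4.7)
I would prove the two inequalities of $wtw(G)=sbn(G)$ separately.

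The easier direction $wtw(G)\ge sbn(G)$ relies on the Helly property for subtrees of a tree. Let $(T,\mathcal S)$ be a weak tree-decomposition of width $k$ and $\mathcal F$ a strong bramble. By Lemma~\ref{lem:con}, for each $B\in\mathcal F$ the set $T_B:=\bigcup_{v\in B}T_v$ is a connected subtree of $T$. Any two such subtrees intersect: $v\in B\cap C$ gives $T_v\subseteq T_B\cap T_C$. By Helly on subtrees of $T$, all $T_B$ share a common node $i^*\in V(T)$, and picking $v_B\in B$ with $v_B\in S_{i^*}$ for each $B$ gives a hitting set for $\mathcal F$ of size at most $|S_{i^*}|\le k$.

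For the harder direction $wtw(G)\le sbn(G)$, I proceed by induction on $|V(G)|$. The base case $|V|\le k$ uses a single bag $V$. In the inductive step with $|V|>k$, I distinguish two cases. If $G$ is $(k+1)$-connected, the family $\mathcal F:=\{V\setminus W\mid W\subseteq V,\ |W|=k\}$ is a strong bramble whenever $|V|>2k$: each element is connected by the connectivity hypothesis, any two intersect in at least $|V|-2k$ vertices, and any hitting set $H$ must satisfy $|H|>k$ (otherwise $H$ would itself be an admissible $W$ and $V\setminus H$ would be missed). Hence $sbn(G)\le k$ forces $|V|\le 2k$, and one partitions $V$ into two sets of size at most $k$ used as two adjacent bags. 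Otherwise $G$ has a separator $W$ of size at most $k$; let $C_1,\dots,C_r$ be the components of $G\setminus W$ with $r\ge 2$. Each $G[C_s\cup W]$ has strictly fewer vertices than $G$ and satisfies $sbn(G[C_s\cup W])\le k$, since any strong bramble of the subgraph is also a strong bramble of $G$ with the same order (vertices outside $C_s\cup W$ never appear in its elements). Applying induction to each piece yields weak tree-decompositions $(T_s,\mathcal S_s)$ of width at most $k$; I glue them using a new central node $r^*$ with bag $S_{r^*}=W$ attached to one chosen node in each $T_s$.

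The main technical obstacle is the gluing: to preserve condition~(3) for every $v\in W$, the subtree $T_v$ in the glued tree must be connected, which forces the attachment bag of each $T_s$ to contain all of $W$. To ensure this I strengthen the inductive hypothesis to the statement: \emph{whenever $sbn(G')\le k$ and $U\subseteq V(G')$ with $|U|\le k$, there exists a weak tree-decomposition of $G'$ of width $\le k$ in which some bag contains $U$}. This strengthened form is proved by a parallel induction: when $G'\setminus U$ is disconnected, $U$ itself plays the role of the separator and one recurses on each $G'[C_s\cup U]$ with distinguished set $U$; when $G'\setminus U$ is connected but $|V(G')|>2k$, a refined argument combining the strong bramble hypothesis with Proposition~\ref{prop:res}(2) allows one either to locate a separator $U'\supseteq U$ of size at most $k$ (recursing with $U'$ as the distinguished set) or to assemble the decomposition from a two-bag construction together with propagations, without exceeding the width bound. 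Applying this strengthened version to each piece with $U=W$ supplies the bag required for a clean gluing and closes the proof.
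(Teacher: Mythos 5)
Your proof of the direction $wtw(G)\ge sbn(G)$ is correct, and it is a genuinely different (and arguably cleaner) argument than the paper's: you invoke the Helly property for subtrees of a tree to produce a single bag $S_{i^*}$ hitting every element of the bramble, whereas the paper counts edge orientations on $T$ to reach a contradiction. Both work; the Helly route is shorter and more conceptual.

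The direction $wtw(G)\le sbn(G)$ is where there is a genuine gap. You correctly identify that naive gluing fails and that one should strengthen the inductive hypothesis to ask for a bag containing a prescribed set $U$ of size at most $k$, and the disconnected case of that strengthened claim (recursing on the pieces $G'[C_s\cup U]$ with distinguished set $U$, then gluing via a central node with bag $W$) is fine. But the connected case is not actually proved: you assert that ``a refined argument combining the strong bramble hypothesis with Proposition~\ref{prop:res}(2) allows one either to locate a separator $U'\supseteq U$ of size at most $k$ ... or to assemble the decomposition from a two-bag construction together with propagations,'' and neither alternative is justified. From $sbn(G')\le k$ and $|V(G')|>2k$ you only get \emph{some} separator of size $\le k$ (via your $\{V\setminus W\}$ bramble), not one containing $U$; and propagating all of $U$ into a bag via Proposition~\ref{prop:res}(2) can push bags past size $k$. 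The subcase where $G'\setminus U$ is connected and $k<|V(G')|\le 2k$ is not addressed at all, and the two-bag split $\{U,V\setminus U\}$ can have $|V\setminus U|>k$ there. The paper resolves exactly this difficulty by a reverse induction on the size of the strong bramble together with an essential application of Menger's theorem, which supplies $|S|$ vertex-disjoint paths from the hitting set $S$ to a large bag $S_{i_a}$; these paths are what make the propagation of $S$ into a bag affordable (each propagated vertex of $S$ is matched to a distinct vertex of $S_j$ already present along its path). Some quantitative control of this kind — Menger or an equivalent — appears to be unavoidable, and its absence is where your sketch would break down.
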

The proof mimics the well-known proof of the duality theorem between tree-width and bramble order~\cite{Diestel, ST},  and is based on the use of Menger's theorem in graph theory. 
 We thus start by recalling the statement of Menger's theorem. 
 
 Let $G=(V,E)$ be a finite simple graph and consider two subsets $X, Y \subseteq V$.
An $(X,Y)$-separator in $G$ is a subset $S \subset V$ such that there is no path in $G \setminus S$ between any point in $X \setminus S$ and $Y \setminus S$.

The connectivity of the pair $(X,Y)$ is the maximum number of vertex disjoint path between $X$ and $Y$ (a point $v\in X \cap Y$ is considered as a path between $X$ and $Y$ of length zero).

Consider a set of $k$ vertex disjoint paths between $X$ and $Y$. Obviously any $(X,Y)$ separator in $G$ 
should contain at least one point on each of the $k$ paths. In other words, the size of any $(X,Y)$ separator is at 
least the connectivity of the pair $(X,Y)$ in $G$. Menger's theorem asserts the equality of these two quantities.

\begin{thm}[Menger~\cite{Menger}]
The connectivity of a pair  $(X,Y)$, $X, Y \subseteq V$, is equal to the minimum size of an $(X,Y)$-separator in $G$. 
\end{thm}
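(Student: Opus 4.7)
The plan is to prove Menger's theorem by induction on $|E(G)|$. The easy direction is immediate from the definitions: given $k$ pairwise vertex-disjoint $X$-$Y$ paths $P_1,\dots,P_k$, any $(X,Y)$-separator $S$ must intersect each $P_i$ (because removing $S$ must disconnect the endpoints of $P_i$ lying in $X\setminus S$ and $Y\setminus S$, or else $P_i$ provides a path), whence $|S|\geq k$. So the real task is to construct $k$ disjoint paths, where $k$ denotes the minimum separator size.

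The base case $|E(G)|=0$ is trivial: every $X$-$Y$ path is a single vertex of $X\cap Y$, and $X\cap Y$ is simultaneously the unique minimum separator and the maximum family of disjoint paths. For the inductive step, I would pick an arbitrary edge $e=uv$ and analyze two cases according to what happens when $e$ is deleted. In Case (a), the minimum $(X,Y)$-separator of $G-e$ still has size $k$; then applying the inductive hypothesis to $G-e$ (which has strictly fewer edges) produces $k$ disjoint $X$-$Y$ paths in $G-e$, and these paths are automatically paths in $G$. In Case (b), removing $e$ drops the minimum separator size to $k-1$ (it cannot drop further, since if $S$ separates $X$ from $Y$ in $G-e$ then $S\cup\{u\}$ separates them in $G$). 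I would then pass to the contracted graph $G/e$, in which $u$ and $v$ are identified to a single vertex $w$; because $G/e$ has strictly fewer edges, I can again apply the inductive hypothesis, after first showing that the minimum $(X,Y)$-separator in $G/e$ has size at least $k$. The $k$ disjoint paths in $G/e$ then lift to $G$: each path that avoids $w$ lifts verbatim, and the unique path (if any) passing through $w$ is lifted by using the edge $e$, using $u$ on one side and $v$ on the other.

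The main obstacle will be in Case (b), specifically the claim that the minimum $(X,Y)$-separator in $G/e$ has size at least $k$. If $T$ is such a separator in $G/e$ with $w\notin T$, then $T$ directly separates $X$ from $Y$ in $G$, forcing $|T|\geq k$; but if $w\in T$, the naive replacement $(T\setminus\{w\})\cup\{u,v\}$ is a separator in $G$ of size $|T|+1$, which only yields $|T|\geq k-1$. To close this gap I plan to choose a minimum $(X,Y)$-separator $S$ of size $k-1$ in $G-e$ carefully, then consider the two "sides" $G_X$ and $G_Y$ of the corresponding separator $S\cup\{u\}$ (or $S\cup\{v\}$) in $G$, and apply the inductive hypothesis to each side separately. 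Stitching the resulting families of paths together across the separator will produce the desired $k$ disjoint $X$-$Y$ paths. Should the edge-induction prove too delicate in making these two subgraphs strictly smaller, a clean fallback is to run the induction on $|V(G)|+|E(G)|$, which permits reduction along either a vertex split or an edge deletion/contraction and gives enough flexibility in every case.
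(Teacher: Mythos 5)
The paper does not prove this statement; it cites Menger's original 1927 result and uses it as a black box in the proof of Theorem~\ref{thm:stbramwt}. So there is no paper proof to compare against, and I will simply assess your sketch.

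Your overall plan --- induction on $|E(G)|$, splitting on whether deleting an edge $e=uv$ lowers the minimum separator size --- is the standard edge-induction proof. You are right that the contraction route is blocked: with $T$ a minimum separator in $G/e$ containing the contracted vertex $w$, the only separator of $G$ that $(T\setminus\{w\})\cup\{u,v\}$ gives you has size $|T|+1$, yielding $|T|\ge k-1$, not $|T|\ge k$. (And indeed $|T|=k-1$ can occur.) The two-sides fix you then describe is the correct one, but the part of the argument you flag as delicate is in fact where the whole proof lives, and your sketch omits the key observation that makes it work. Namely: let $S$ be a size-$(k-1)$ separator of $G-e$. If $u$ and $v$ lay on the same side of $S$ in $G-e$, or if one of them were disconnected from both $X$ and $Y$ in $(G-e)\setminus S$, or if one of them were in $S$, then $S$ itself would already separate $X$ from $Y$ in $G$, contradicting $|S|<k$. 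So after relabelling, $u$ is on the $X$-side and $v$ on the $Y$-side of $S$. This is precisely what guarantees that the two subgraphs $G_X$ (vertices reachable from $X$ in $G$ avoiding $S_u=S\cup\{u\}$, together with $S_u$) and $G_Y$ (defined symmetrically with $S_v=S\cup\{v\}$) each omit the edge $e$, hence are strictly smaller, and also that $V(G_X)\cap V(G_Y)=S$, which is what you need for the stitching to produce vertex-disjoint paths. You must also verify that every $(X,S_u)$-separator in $G_X$ is an $(X,Y)$-separator of $G$ (and dually for $G_Y$), so that the inductive hypothesis delivers $k$ disjoint paths on each side; this follows because $S_u$ separates $X$ from $Y$ and the initial segment of any $X$--$Y$ path up to its first $S_u$-vertex lies in $G_X$. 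Without the opposite-sides observation the inductive step genuinely fails (your $G_X$ could equal $G$), so that is the gap you would need to close; once it is in place, the rest of your plan goes through and the fallback to induction on $|V|+|E|$ is unnecessary.
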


We are now ready to prove Theorem~\ref{thm:stbramwt}.
\begin{proof}[Proof of Theorem~\ref{thm:stbramwt}] We first show that $sbn(G) \leq wtw(G)$.
 Let $(T,\mathcal S)$ be a weak tree-decomposition of $G$. Consider a strong bramble $\mathcal F$ for $G$. 
 We show that there exists a node $i$ such that 
 $S_i$ intersects any element $X \in \mathcal F$, i.e., $S_i$ is a hitting set for $\mathcal F$. This proves the claimed inequality. 
 For the sake of a contradiction, suppose this is not the case. 
 This means that 
 for any node $i$ in $T$,  there exists $X_i \in \mathcal F$ such that $X_i \cap S_i =\emptyset$, 
 in other words, $T_v$ does not contain node $i$ for any $v \in X_i$. 
 Thus, the union $T_i = \cup_{v\in X_i} T_v$ does not contain $i$. In addition,  
 by Lemma~\ref{lem:con}, $T_i$ is a connected subtree of $T$.  
 This implies that $T_i$ is entirely included in one of the connected components of $T \setminus i$. 
 Let $j$ be the unique node of this connected component which is adjacent to $i$, and give the orientation $ij$ to the 
 edge $\{i,j\}$ of $T$. Doing this for any node $i$ of $T$, we give the size of $V(T)$ orientations to the edges of $T$. 
 Since $T$ contains $|V(T)|-1$ edges, there exists an edge $\{i,j\}$ which gets both the orientations $ij$ and $ji$.
 This precisely means that the two trees $T_i$ and $T_j$ are disjoint, which implies that 
 $X_i \cap X_j =\emptyset$, contradicting the defining property of a strong bramble.

 \medskip
 To prove $wtw(G) \leq sbn(G)$, we show the existence of a weak tree-decomposition of $G$ of order at most $k:=sbn(G)$.
 The proof goes by an induction procedure as follows.  We claim that for any graph $G$ and for any strong bramble 
 $\mathcal F$ in $G$, there exists a weak tree-decomposition $(T,\mathcal X)$ such that for any node 
 $i$ of $T$, either 
 $|X_i| \leq k $ or, otherwise if $|X_i| > k$, then $X_i$ is not a hitting set for $\mathcal F$. 
 The proof of this latter statement is by a reverse 
 induction on  $|\mathcal F|$ for any strong bramble $\mathcal F$ in $G$. 
 Once this has been proved, for the empty strong bramble $\mathcal F = \emptyset$, since every set is a hitting set for $\mathcal F$, 
 we get a weak  tree-decomposition of $G$ of width at most $k$, and the theorem follows. 
 
 Since there is no strong bramble of size larger than $2^{|G|}$ in $G$, the base of the induction 
 holds trivially for strong brambles of size $2^{|G|}+1$ (which do not exist). Suppose that the statement is true 
 for an integer $N$ and any strong bramble 
 $\mathcal F$ of size $|\mathcal F| =N$ in $G$, we show that it also holds for all strong brambles of size $N-1$.
 
For the sake of a contradiction, suppose that the statement does not hold for $N-1$. 
Let $\mathcal F$ be a strong bramble with $|\mathcal F| = N-1$ for which the statement does not hold. 
 Consider a hitting set $S$ of  $\mathcal F$ in $G$ of size equal to the order of $\mathcal F$. Note that $|S| \leq k$.
 
Let $C_1, \dots, C_l$ be all the connected components of $G \setminus S$, and for any integer $1\leq a\leq l$, 
 consider the induced subgraph $G^a$ of $G$ with vertex set 
 $S \cup V(C_a)$. We will show that there exists a weak tree-decomposition $(T^a, \mathcal S^a = \{S^a_j\}_{j\in V(T^a)})$ 
 of $G^a$ such that
 \begin{itemize}
  \item[$(i)$] There is a node $i_a$ in $T^a$ such that $S^a_{i_a} = S$;
  \item[$(ii)$] For any node $j$ of $T^a$ with $|S^a_j| >k$, $S^a_j$ is not a hitting set for $\mathcal F$.
 \end{itemize}
Once this statement is proved, we obtain a weak tree-decomposition of the whole graph by gluing the trees 
$T^a$ on the node $i_a$ to obtain a tree 
$T$, and by defining for any node $j$ of the tree $T$, which is thus a node of one of the trees $T^a$ for some $a$, 
$S_j :=S^a_j$. 
Since $S^a_{i_a} = S$, these sets are well-defined, and it is easy to verify that they form a weak tree-decomposition 
of the whole graph $G$. 
In addition, by Property $(ii)$ above, any $S_j$ of size strictly larger than 
$k$ belongs is not a hitting set for $\mathcal F$. 
Thus, $(T,\mathcal S)$ is a 
weak tree-decomposition for $G$ which 
satisfies the required property with respect to $\mathcal F$, and this leads to a contradiction with our choice of 
$\mathcal F$.

We are thus left to prove the above claim. Consider one of the graphs $G_a$. There are two cases to consider:

\begin{enumerate}
 \item Either $C_a$ is not a hitting set for $\mathcal F$.
\end{enumerate}

In this case, we get a weak tree-decomposition $(T^a, \mathcal S^a)$ of $G^a$ by taking a path of length 
two $T^a$ on two vertices $i_a$ and $j$, and by defining 
$S^a_{i_a} =S$  and $S^a_j = C_a$. Obviously $(i)$ and $(ii)$ are verified. 

\begin{enumerate}
  \item[(2)] Or $C_a$ is a hitting set for $\mathcal F$.
\end{enumerate}

Since $C_a$ is connected, this precisely means that $\mathcal F_a =\mathcal F \cup \{C_a\}$ is a strong bramble in $G$. 
Note that $S$ is a hitting set for $\mathcal F$ and $S \cap C_a = \emptyset$, so $\mathcal F_a \neq \mathcal F$, and thus 
$|\mathcal F_a| = |\mathcal F|+1 =N$. By the hypothesis of our induction, there exists a weak tree-decomposition 
$(T, \mathcal S)$ for $G$ such that any $S_i$, for a node $i$ in $T$, with $|S_i|>k$ is not a hitting set 
for $\mathcal F_a$. By the choice of $\mathcal F$,  the weak tree-decomposition $(T, \mathcal S)$ has a node  $i_a$ in
$T$ with $|S_{i_a}| >k$ such that $S_{i_a}$ is a hitting set for $\mathcal F$. 
We must have $S_{i_a} \cap C_a =\emptyset$ (otherwise, $S_{i_a}$ 
would be a hitting set for $\mathcal F_a$).

\medskip
We would like to restrict this weak tree-decomposition to $G^a$, which by Lemma~\ref{prop:res}, is 
a weak tree-decomposition of $G^a$. However, the restriction does not necessarily verify properties 
$(i)$ and $(ii)$, in particular, $S_{i_a} \neq S$, so we use Menger's theorem in order to slightly modify the 
restriction of $(T, \mathcal S)$ to $G^a$, making it satisfy $(i)$ and $(ii)$.

By applying Menger's theorem, we first show  
that there are $|S|$ vertex disjoint paths from $S$ to $S_{i_a}$ in $G$. Consider thus
an $(S, S_{i_a})$-separator $A$ in $G$. We are reduced to proving that $|A| \geq |S|$. Suppose this is not the case and so $|A| < |S|$.  
Since the order of $\mathcal F$ is equal to $|S|$, $A$ is not
a hitting set for $\mathcal F$, and there exists an element $X \in \mathcal F$ such that 
$A \cap X =\emptyset$. To obtain a contradiction, note that $X$ is connected and thus there exists a path in $X$ from 
a vertex in $X\cap S \neq \emptyset$ to a vertex in $X \cap S_{i_a} \neq \emptyset$. This path does not contain any point of 
$A$ contradicting the choice of $A$ as an $(S, S_{i_a})$-separator.

We thus have a collection of $|S|$ vertex 
disjoint paths between $S$ and $S_{i_a}$ in $G$. Denote the unique path with endpoint $v \in S$ with $P_v$. Note that since the number of paths is $|S|$ and they are vertex disjoint, we have
 $S \cap P_v = \{v\}$.

Since the other end-point of $P_v$ is in $S_{i_a}$ and 
$S_{i_a} \cap C_a =\emptyset$, this in particular shows that the path $P_v$ intersects $G_a$ only at $v$.  

We now define the weak tree-decomposition $(T^a, \mathcal S^a)$ as follows. Let $T^a =T$, 
and for any $v \in S$, pick a node $t_v$ of $T$ with $t_v \in T_v$ (i.e., $v \in S_{t_v}$), 
and for any node $j$ of $T$ on the unique path from $i_a$ to 
$t_v$, add $v$ to $S_j$. By Proposition~\ref{prop:res}, this leads to a weak tree-decomposition $(T, \mathcal S')$ of $G$.
Define $(T^a, \mathcal S^a)$ as the restriction of $(T, \mathcal S')$ to $G^a$, which, 
again by Proposition~\ref{prop:res}, is a weak tree-decomposition of $G^a$.

Note that since $S'_{i_a} = S_{i_a} \cup S$ and $S_{i_a} \cap C_a =\emptyset$, we have $S^a_{i_a} = 
S'_{i_a} \cap V(G_a) = S$, and so Property $(i)$ holds. 
We now show that Property $(ii)$ above holds too, which finishes the proof of our theorem.

We first show that for any node $j$, $|S_j| \geq |S^a_j|$, or equivalently, $|S_j \cap S| \geq |S_j^a \cap S|$. 
Since the paths $P_v$ for $v\in S \cap S^a_j$ are vertex disjoint and intersect $V(G^a)$ only at 
$S \cap S^a_j$, in order to prove $|S_j \cap S| \geq |S_j^a \cap S|$, it will be enough to show that for any 
$v\in S \cap S^a_j$,  $S_j$ contains at least one vertex in $P_v$. 
Consider a vertex 
$v \in S_j^a \cap S$. By the definition of $S^a_j$, this means that either $v\in S_j$, or $j$ lies on the unique path 
between $i_a$ and $t_v$ in $T$. In the former case, we obviously have $\{v\} \subset S_j \cap P_v$. In the latter case, both the sets 
$S_{i_a}$ and $S_{t_v}$ have non-empty intersections with $P_v$ 
(which we recall is a path from $v \in S_{t_v}$ to a vertex in $S_{i_a}$). This means that $i_a$ and $t_v$ both belong to 
$\bigcup_{u\in P_v} T_u$, which is a connected subtree of $T$ by Lemma~\ref{lem:con}. This in particular means that the unique path between 
$i_a$ and $t_v$ is contained in $\bigcup_{u\in P_v} T_u$, in other words, there is a vertex $u$ on $P_v$ such that 
$j \in T_u$. Reformulating this, we get $u \in S_j \cap P_v$, which is what we wanted to prove.

To show $(ii)$, let now $j$ be a node of $T$ with $S^a_{j}$ of size strictly larger than $k$.
Since $|S| \leq k$, this means $S_j^a \setminus S \neq \emptyset$. Since $S^a_{j} \subset C_a \cup S$, this shows that 
$S^a_{j} \cap C_a \neq \emptyset$. By what we just proved, $|S_j|\geq |S^a_j|$, 
so $S_j$ is not a hitting set for $\mathcal F_a = \mathcal F \cup \{C_a\}$.  On the other hand, 
$S^a_j \subseteq S_j \cup S$, and $S^a_j$ has non-empty intersection with $C_a$, which shows that $S_j \cap C_a \neq \emptyset$. 
This means there exists $X \in \mathcal F$ such that $S_j \cap X = \emptyset$. 
 We show that $S^a_j \cap X = \emptyset$, which implies that $S^a_j$ is not a hitting set for $\mathcal F$. 
 
 Suppose this is not true, and so $S^a_j \cap X \neq \emptyset$. Since $S_j \cap X =\emptyset$, and $S_j^a \subset S_j \cup S$, 
 a point $v$ in 
 $S^a_j \cap X$ should belong to $S$. In other words, $j$ is on the unique path between $i_a$ and $t_v$ in $T$.
 To get a contradiction, note that $X$ intersects $S_{i_a}$ ($S_{i_a}$ is a hitting set for $\mathcal F$), 
 it intersects $S_{t_v}$ ($v\in X \cap S_{t_v}$), but it does not intersect $S_j$. This is in contradiction with Lemma~\ref{lem:con}.

 The proof of Theorem~\ref{thm:stbramwt} is now complete. 
\end{proof}

\section{Proof of the spectral lower bound on divisorial gonality}

Using the results of the previous section, we are now ready to give the proof of Theorems~\ref{thm:divgon} and~\ref{thm:random}.
 
 \medskip
 
The following is a direct corollary of Theorem~\ref{thm:stbramwt},  Proposition~\ref{prop:link}, and 
 Theorem~\ref{thm:divgontopbram}.
 \begin{cor}\label{cor1} Let $\Gamma$ be a metric graph. The divisorial gonality of $\Gamma$ is lower bounded by 
 the weak tree-width of any simple graph model $G=(V,E)$ of $\Gamma$.
 \end{cor}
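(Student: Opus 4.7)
The corollary is essentially a chaining of three preceding results, and my plan is simply to assemble the chain correctly for any fixed simple graph model $G=(V,E)$ of $\Gamma$. The first step is to invoke Theorem~\ref{thm:divgontopbram}, which reduces the problem to establishing $tbn(\Gamma) \geq wtw(G)$.

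For the second step, I would apply Theorem~\ref{thm:stbramwt}, the duality between weak tree-decompositions and strong brambles, to rewrite $wtw(G) = sbn(G)$. This turns the remaining task into exhibiting a topological bramble in $\Gamma$ of order at least $sbn(G)$. The construction of such a bramble is furnished directly by Proposition~\ref{prop:link}(1): given any strong bramble $\mathcal{F}$ in the simple model $G$, the family $\Gamma[\mathcal{F}] = \{\Gamma[X] : X \in \mathcal{F}\}$ is a topological bramble in $\Gamma$ of the same order. Choosing $\mathcal{F}$ to realize $sbn(G)$ therefore produces a topological bramble of order exactly $sbn(G)$, so $tbn(\Gamma) \geq sbn(G)$.

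Stringing these three observations together yields
\[\gamma_{\div}(\Gamma) \;\geq\; tbn(\Gamma) \;\geq\; sbn(G) \;=\; wtw(G),\]
which is the claimed inequality. There is no real obstacle here, as all the genuine work has already been done: Theorem~\ref{thm:divgontopbram} handles the divisor-theoretic side linking gonality to top-brambles, Proposition~\ref{prop:link} bridges the topological and combinatorial bramble notions (using, crucially, that $G$ is a \emph{simple} model so that $\Gamma[X]$ is well-defined and connected for connected $X \subseteq V$), and Theorem~\ref{thm:stbramwt} provides the Menger-based duality on the combinatorial side. The only point meriting a brief mention in the write-up is that Proposition~\ref{prop:link}(1) applies to the specific model $G$ in the hypothesis rather than to the supremum over all models, so one does not need the full strength of Proposition~\ref{prop:link}(3) to conclude.
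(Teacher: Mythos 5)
Your argument is correct and is exactly the chain the paper has in mind: the corollary is stated there as a direct consequence of Theorem~\ref{thm:divgontopbram}, Proposition~\ref{prop:link}, and Theorem~\ref{thm:stbramwt}, assembled just as you do via $\gamma_{\div}(\Gamma) \geq tbn(\Gamma) \geq sbn(G) = wtw(G)$. Your remark that only part (1) of Proposition~\ref{prop:link} is needed, rather than the supremum statement in part (3), is a fair observation but does not change the argument.
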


 \begin{prop}\label{prop1} Let $G$ be a simple finite graph. We have $ 2wtw(G)\geq tw(G) + 1$.  
 \end{prop}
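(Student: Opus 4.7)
The plan is to construct from a weak tree-decomposition of width $k = wtw(G)$ a classical tree-decomposition of width at most $2k-1$, which yields $tw(G) \leq 2\,wtw(G) - 1$, equivalently $2\,wtw(G) \geq tw(G)+1$. The two structural defects of a weak tree-decomposition, compared to a classical one, are (a) the edge condition is relaxed (an edge of $G$ need only lie in the union $S_i \cup S_j$ of two adjacent bags, not in a single bag), and (b) the width is counted as $\max |S_i|$ rather than $\max |S_i| - 1$. Both defects can be fixed simultaneously by a single subdivision-and-merge trick on the decomposition tree.

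First, I would take a weak tree-decomposition $(T, \mathcal S = \{S_i\}_{i \in I})$ of $G$ of width $wtw(G) = k$ and form a new tree $T'$ by subdividing each edge $\{i,j\}$ of $T$ with a new middle node $m_{ij}$; thus $V(T') = I \sqcup \{m_{ij} : \{i,j\} \in E(T)\}$, with $m_{ij}$ adjacent to both $i$ and $j$ in $T'$. I would then define the new family $\mathcal X = \{X_t\}_{t\in V(T')}$ by $X_i := S_i$ for $i \in I$ and $X_{m_{ij}} := S_i \cup S_j$ for each middle node.

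The next step is to check that $(T', \mathcal X)$ is a genuine tree-decomposition of $G$. Coverage of $V(G)$ is immediate from condition (1) of the weak tree-decomposition. For the edge condition, given any edge $e = \{u,v\}$ of $G$, condition (2) of the weak tree-decomposition provides an edge $\{i,j\}$ of $T$ with $e \subseteq S_i \cup S_j = X_{m_{ij}}$, so $e$ is contained in a single bag of $\mathcal X$. The connectivity condition is the only slightly delicate point: for a fixed $v \in V(G)$, the set of nodes of $T'$ whose bag contains $v$ is exactly $T_v \cup \{m_{ij} : i \in T_v \text{ or } j \in T_v\}$, where $T_v$ is the connected subtree of $T$ from condition (3) of the weak tree-decomposition. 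Every such middle node $m_{ij}$ is adjacent in $T'$ to some node of $T_v$, and the middle nodes corresponding to internal edges of $T_v$ automatically lie between nodes of $T_v$ in $T'$; hence the whole set is a connected subtree of $T'$.

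Finally, the width of $(T', \mathcal X)$ is $\max_{t \in V(T')} |X_t| - 1$, and since $|X_i| = |S_i| \leq k$ and $|X_{m_{ij}}| \leq |S_i| + |S_j| \leq 2k$, this width is at most $2k - 1$. Therefore $tw(G) \leq 2\,wtw(G) - 1$, which is the desired inequality. The main (and essentially only) obstacle is the verification of the connectivity condition (3) for the constructed decomposition, where one must carefully treat middle nodes $m_{ij}$ for edges of $T$ with only one endpoint in $T_v$; everything else is a direct translation of the weak tree-decomposition axioms.
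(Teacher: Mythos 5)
Your proof is correct and uses essentially the same idea as the paper: both convert the weak tree-decomposition into a genuine tree-decomposition by placing unions of adjacent bags on the decomposition tree, yielding width at most $2\,wtw(G)-1$. The paper roots $T$ and sets $Y_i := S_i \cup S_{p_i}$ at each non-root node $i$ (with $p_i$ its parent), whereas you subdivide each tree edge and put the union $S_i \cup S_j$ at the new middle node; the difference is purely mechanical.
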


 \begin{proof} Let $(T, \mathcal S)$ be a weak tree-decomposition of $G$ of order $wtw(G)$, i.e., $|S_i| \leq wtw(G)$ 
 for any node $i$ of $T$. 
 We build a tree-decomposition $(T, Y)$ for $G$ of width at most $2wtw(G)-1$ out of $(T, \mathcal S)$. 
 
 Fix a node $\mathfrak r$ of $T$, and consider $T$ as being rooted at $\mathfrak r$. 
  Any node $i\neq \mathfrak r$ has a unique parent $p_i$ in the $\mathfrak r$-rooted tree $T$, which, we recall, 
  is the unique neighbor of $i$ in the unique path from $i$ to $\mathfrak r$ in $T$.  For any node $i$ of $T$ different from $\mathfrak r$, define $Y_i := X_i \cup X_{p_i}$.
 Furthermore, define $Y_\mathfrak r = X_\mathfrak r$. Let $\mathcal Y = \{Y_i\}_{i\in V(T)}$. 
 It is easy to check that $(T, \mathcal Y)$ is a tree-decomposition of $G$. In addition, we have $|Y_i| \leq 2 wtw(G)$, 
 from which the proposition follows.
\end{proof}

\subsection{Spectral lower bound for tree-width} We will need the following slight simplification of a
spectral lower bound for tree-width proved by Chandran-Subramanian~\cite{CS}.

Recall the definition of the discrete Laplacian $L_G$ on a connected graph $G=(V,E)$. For a function $f:V\to \mathbb \mathbb R$, $L_G(f)$ is the real-valued function on $V$ whose value at  a given vertex $v$ is given by
\[L_G(f)(v)=\sum_{u\sim v}f(v)-f(u),\]
where the sum is taken over all vertices $u$ adjacent to $v$. Denote by $\lambda_1(G)$ the smallest non-trivial eigenvalue of $L_G$.
\begin{thm}[Chandran-Subramanian~\cite{CS}]\label{prop:CS}
 For any connected graph $G=(V,E)$, the following holds
 \[tw(G)+1 \geq \frac{|V|\lambda_1(G)}{12\,d_{\max}},
 \]
 where $d_{\max}$ is the maximum valence of vertices of $G$.
\end{thm}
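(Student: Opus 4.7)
The plan is to combine two well-known ingredients: (i) a graph of tree-width $k$ admits a \emph{balanced vertex separator} of size at most $k+1$, i.e.\ a set $S\subseteq V$ with $|S|\le k+1$ such that every connected component of $G\setminus S$ has at most $|V|/2$ vertices; and (ii) a Cheeger-type spectral lower bound on the size of any balanced separator, obtained by plugging the indicator function of one side into the Rayleigh quotient for $\lambda_1(G)$.

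For step (i), I would fix any tree-decomposition $(T,\{X_i\})$ of $G$ of width $tw(G)$ and apply a centroid argument on $T$: using that for every node $i$ of $T$ and every component $C$ of $T\setminus\{i\}$ the vertex set $\bigl(\bigcup_{j\in C}X_j\bigr)\setminus X_i$ is disjoint from analogous sets attached to the other components of $T\setminus\{i\}$ (by axiom (3) of a tree-decomposition, every vertex of $G$ lies in a connected subtree $T_v$ of $T$), one can find a node $i^*$ such that each of these pieces has size at most $|V|/2$. Moreover, by the same axiom (3), every connected component of $G\setminus X_{i^*}$ is contained in a single component of $T\setminus\{i^*\}$, so taking $S=X_{i^*}$ gives a balanced separator of size $\le tw(G)+1$ with components of $G\setminus S$ all of size $\le |V|/2$.

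For step (ii), first dispose of the easy case: since the combinatorial Laplacian satisfies $\lambda_1(G)\le 2 d_{\max}$ (by the usual estimate $x^\top L x\le 2\sum_v d(v)x_v^2$), whenever $|S|\ge |V|/6$ we already have $tw(G)+1\ge |S|\ge |V|/6\ge |V|\lambda_1(G)/(12\,d_{\max})$. Otherwise $|S|<|V|/6$, so $|V\setminus S|\ge 5|V|/6$ and the components of $G\setminus S$, each of size $\le |V|/2$, can be partitioned greedily (either take the largest component alone when it has size $\ge |V|/3$, or add components one by one since the jump stays $<|V|/3$) into a union $A$ with
\[
\frac{|V|}{3}\le |A|\le \frac{2|V|}{3}.
\]
Now take $f=\mathbf{1}_A-\frac{|A|}{|V|}\mathbf{1}$ as a test function: it is orthogonal to the constants,
\[
\sum_v f(v)^2=\frac{|A|(|V|-|A|)}{|V|}\ge \frac{2|V|}{9},
\]
and since the only edges of $G$ with exactly one endpoint in $A$ are those joining $A$ to $S$, we have
\[
\sum_{\{u,v\}\in E}\bigl(f(u)-f(v)\bigr)^2\le d_{\max}|S|.
\]
The variational formula for $\lambda_1(G)$ then gives $\lambda_1(G)\le \frac{9\,d_{\max}|S|}{2|V|}$, hence $tw(G)+1\ge |S|\ge \frac{2|V|\lambda_1(G)}{9\,d_{\max}}$, which is even stronger than the claimed $\frac{|V|\lambda_1(G)}{12\,d_{\max}}$.

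The main obstacle is really the careful execution of the centroid argument in step (i); this is classical but requires being precise about how bags of the tree-decomposition interact with components of $G\setminus X_{i^*}$, using the connected-subtree axiom in an essential way. Everything else reduces to elementary balancing of the components of $G\setminus S$ and a one-line Rayleigh quotient estimate, and the slack between $2/9$ and $1/12$ in the final constant is what allows the paper to speak of a ``slight simplification'' of the original formulation of Chandran--Subramanian.
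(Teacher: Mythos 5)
Your proposal is correct, and it is built from the same two ingredients the paper uses (a balanced vertex separator extracted from a bag of a tree-decomposition via the ``orient every node toward its heavy side'' argument, followed by a Rayleigh-quotient estimate with an indicator-type test function), but you organize them differently and more efficiently. The paper argues by contradiction, assuming all bags have size at most $\rho=\tfrac{n\lambda_1}{12d_{\max}}$; inside that reductio it only proves the weaker $\tfrac{2(n-|S|)}{3}$-balance, and then it needs \emph{two} applications of the Rayleigh bound (the first to show each component $Y_i$ of $G\setminus S$ already has size at most $n/4$, the second after regrouping components into a set of size in $[n/4,n/2]$). You instead prove the stronger, unconditional $n/2$-balanced-separator statement directly (correctly: strict inequality of integer cardinalities on both sides of a doubly-oriented tree edge gives two disjoint subsets of $V$ of total size $>n$), dispose of the easy case $|S|\ge n/6$ via $\lambda_1\le 2d_{\max}$, and in the remaining case regroup the $\le n/2$-sized components into a set $A$ with $n/3\le|A|\le 2n/3$ so that a single Rayleigh quotient estimate, with $f=\mathbf 1_A-\tfrac{|A|}{|V|}\mathbf 1$, already gives $tw(G)+1\ge|S|\ge \tfrac{2|V|\lambda_1}{9d_{\max}}$. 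All the intermediate claims you invoke are verified: the pieces $A_i(j)\setminus X_{i}$ attached to distinct neighbors $j$ are pairwise disjoint by the connected-subtree axiom, each component of $G\setminus X_{i^*}$ sits inside a single $A_{i^*}(j)$, the greedy regrouping cannot overshoot $[n/3,2n/3]$ because each increment is $<n/3$ and the total exceeds $2n/3$, the cut edges from $A$ go only into $S$, and $\sum_v f(v)^2=\tfrac{|A|(|V|-|A|)}{|V|}\ge\tfrac{2|V|}{9}$. Net effect: a direct (non-reductio) proof, one fewer Rayleigh step, and a better constant $\tfrac{2}{9}$ in place of the paper's $\tfrac{1}{12}$; this would also marginally improve the constant $C$ in Theorem~\ref{thm:divgon} if propagated through Section 5.
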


For the sake of completeness, we include the short proof of the above theorem. First recall the following variational characterization of $\lambda_1$:
\[\lambda_1 = \inf_{f: V \rightarrow \mathbb R \,\mathrm{with}\,\, \sum_v f(v)=0} \frac{\sum_{uv \in E} 
\bigl(f(u)-f(v)\bigr)^2}{\sum_{v\in V} f(v)^2}.\]
Let $Y$ and $Z$ be two disjoint non-empty subsets of $V$. Applying this to the (test) function $f$ defined by
 $f(z) = \frac 1{|Z|}$ for $z\in Z$, $f(y) = - \frac 1{|Y|}$ for $y \in Y$ and $f(w) = 0$ for
 any $w\in V \setminus (X \cup Y)$, which satisfies $\sum_{v\in V} f(v)=0$,  we get
\begin{equation}\label{eq*}
\lambda_1 \leq \Bigl(|E| - |E(Y)| - |E(Z)|\Bigr)\,\Bigl(\frac 1{|Y|} + \frac 1{|Z|}\Bigr),
\end{equation}
where $E(A)$ denotes the set of all edges with both endpoints in $A$. 
This is used repeatedly in the proof.
\begin{proof}[Proof of Theorem~\ref{prop:CS}] Denote by $n$ the number of vertices of $G$. 
For the sake of a contradiction, assume the inequality does not hold and 
let $(T, \mathcal S = \{S_i\})$ be a 
 tree decomposition of $G$ such that
 \[|S_i| \leq \frac{n\lambda_1}{12\,d_{\max}}.
 \]
 for any vertex $i$ of $T$. Denote by $\rho$ the quantity in the right hand side of the above equation. 
 The following argument, used also in the proof of Theorem~\ref{thm:stbramwt}, 
 shows the existence of a subset $S_i \in \mathcal S$ (thus, of size at most $\rho$) such that each component of $G\setminus S$ has size at 
 most $\frac {2(n-|S|)}3$. Suppose such a set $S$ does not exist. Consider a node $i$ of $T$.   For each neighbor $j$ of $i$ in the tree,  let $A_i(j)$ be the union of all the $S_k$ 
 with $k$ being a node in the subtree of $T \setminus \{i\}$ which contains $j$. 
 Our assumption implies one of the sets $A_i(j)\setminus S_{i}$, for $j$ adjacent to $i$ in $T$, 
 has size strictly larger than $\frac {2(n-|S_{i})|}3$. Give the orientation $i\rightarrow j$ to the edge 
 $e=\{i,j\}$ of $T$. Doing this for any node $i$, we give orientations to the edges of the tree exactly $|V(T)|$ times. 
  Since $T$ has $|V(T)|-1$ edges, at least one edge $\{i,j\}$ gets orientated twice, which means 
  $|A_i(j) \setminus S_i| > \frac{2(n-|S_i|)}3$ and $|A_j(i)\setminus S_j| > \frac{2(n-|S_j|)}3$. 
  Since $\rho\leq \frac n{12}$, this leads to a contradiction: indeed, the union of the 
  two disjoint sets $A_i(j) \setminus S_i$ and $A_j(i)\setminus S_j$ would have more than 
  $\frac{4n}3 - \frac{4\rho}3 >n$ vertices.

Let $S$ be a set of size at most $\rho$ such that all the connected components 
$Y_1,\dots, Y_s$ of $G \setminus S$ has size at most $\frac {2(n-|X|)}3$. Applying Inequality~\eqref{eq*} to 
the disjoint sets $Y_i$ and 
$Z_i=V \setminus (X\cup Y_i)$, and noting that  the quantity $|E| -|E(Y_i)| - |E(Z_i)|)$ is bounded by 
$|S|d_{\mathrm{max}}$, and $|Z_i| \geq \frac{n-|S|}3 \geq \frac{|Y|}{2}$, we get 
$\lambda_1 \leq \rho d_{\max} \frac{3}{|Y|} = \frac{n\lambda_1}{4|Y|}.$ This shows that each $Y_i$ has size at most 
$\frac n4$. 

Consider now the smallest $j$ such that 
$\overline Y_j= Y_1\cup \dots \cup Y_j$ has size at least $\frac n4$. Since each $Y_i$ has size at most $\frac n4$, 
$\overline Y_j$ has size at most $\frac n2$. Since $|S| \leq \rho \leq \frac{n}{12}$, we have $|\overline Z_j| \geq \frac n2 - \frac n{12} 
=\frac{5n}{12}$. Applying now Inequality~\eqref{eq*} to $\overline Y_j$ and 
$\overline Z_j=V\setminus (X \cup \overline Y_j)$, we 
get $\lambda_1 \leq \rho d_{\max}(\frac 4n+\frac {12}{5n}) = \frac{n}{12}(\frac 4n+\frac {12}{5n})\lambda_1$, which is a contradiction since 
$\lambda_1$ is strictly positive. 
 \end{proof}
The following result is a direct consequence of Theorem~\ref{thm:divgontopbram}, 
Theorem~\ref{thm:stbramwt}, Corollary~\ref{cor1} and Proposition~\ref{prop1}.
 \begin{thm}\label{thm:spectdisc} Let $\Gamma$ be a metric graph and $G=(V,E)$ a simple graph model of $\Gamma$. The divisorial gonality of $\Gamma$ satisfies the inequality
 $$\gamma_{\mathrm{div}}(\Gamma) \geq \frac{|V|\lambda_1(G)}{24 d_{\max}}.$$
\end{thm}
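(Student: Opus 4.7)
The plan is to simply chain together the four previously-established inequalities in the order in which they have been introduced, since all the substantive work has already been done in the preceding sections. The target $\gamma_{\mathrm{div}}(\Gamma) \geq \frac{|V|\lambda_1(G)}{24 d_{\max}}$ has the flavor of a spectral lower bound on the discrete tree-width, scaled down by a factor of two coming from the passage between tree-width and weak tree-width.

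First, I would invoke Corollary~\ref{cor1}, which tells us that for any simple graph model $G=(V,E)$ of $\Gamma$ one has $\gamma_{\mathrm{div}}(\Gamma) \geq wtw(G)$. (This corollary itself is the synthesis of Theorem~\ref{thm:divgontopbram}, Proposition~\ref{prop:link}, and Theorem~\ref{thm:stbramwt}, so nothing more is needed from the bramble/weak-tree-decomposition side.) Second, I would apply Proposition~\ref{prop1} to pass from the weak tree-width back to the classical tree-width, obtaining $wtw(G) \geq \tfrac{1}{2}(tw(G)+1)$. Third, I would apply the Chandran--Subramanian spectral lower bound (Theorem~\ref{prop:CS}) to get
\[
tw(G)+1 \geq \frac{|V|\,\lambda_1(G)}{12\,d_{\max}}.
\]
Chaining these three inequalities yields
\[
\gamma_{\mathrm{div}}(\Gamma) \geq wtw(G) \geq \frac{tw(G)+1}{2} \geq \frac{|V|\,\lambda_1(G)}{24\,d_{\max}},
\]
which is exactly the claimed bound.

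There is essentially no obstacle in this particular step: every ingredient has been proved above, and the factor $24 = 2 \cdot 12$ comes transparently from the $\tfrac{1}{2}$ loss in Proposition~\ref{prop1} combined with the $\tfrac{1}{12}$ constant in Theorem~\ref{prop:CS}. The only thing to double-check is that Proposition~\ref{prop1} is being applied to the same simple graph model $G$ that appears in the statement, and that $\lambda_1(G)$ denotes the smallest non-trivial eigenvalue of the \emph{discrete} Laplacian $L_G$ (as fixed just before Theorem~\ref{prop:CS}), so that no conversion between the discrete and continuous spectra is needed at this stage. The passage to the continuous Laplacian $\lambda_1(\Gamma)$, together with the choice of an optimal simple graph model producing $\ell_{\min}(\Gamma)$ and $\mu(\Gamma)$, belongs to the proof of Theorem~\ref{thm:divgon} and is not required here.
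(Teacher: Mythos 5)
Your proof is correct and is essentially identical to the paper's, which presents this theorem as a direct consequence of Corollary~\ref{cor1}, Proposition~\ref{prop1}, and the Chandran--Subramanian bound (Theorem~\ref{prop:CS}); your chaining $\gamma_{\mathrm{div}}(\Gamma) \geq wtw(G) \geq \tfrac{1}{2}(tw(G)+1) \geq \tfrac{|V|\lambda_1(G)}{24\,d_{\max}}$ is exactly the intended argument. Your care to note that $\lambda_1(G)$ here means the discrete Laplacian eigenvalue, with the continuous comparison deferred to the proof of Theorem~\ref{thm:divgon}, matches the paper's structure.
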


\subsection{Divisorial gonality of random graphs}\label{sec:random}
In this section we discuss some direct consequences of Theorem~\ref{thm:spectdisc} above when the underlying graph of the metric graph is a random graph (according to some model), and the edge lengths are arbitrary. 

Let $G=(V,E)$ be a simple graph, and $\Gamma$ any metric graph with $G$ as a model. The set of vertices of $G$ form a rank-determining set for $\Gamma$~\cite{Luo, HKN}, it follows that the divisor $\sum_{v\in V} (v)$ has rank at least one. In other words, $\gamma_{\mathrm{div}}(\Gamma) \leq n$. As we discuss now, in well-known classes of random graphs, we obtain that the 
divisorial gonality is $\Theta(n)$ with probability tending to one as $n$ goes to infinity. 

\medskip

Let  $G\in G(n,p)$ be  a Erdo\"s-R\'enyi random graph on $n$ vertices where any pair of two vertices are joined with an edge  independently with probability $p$. The threshold for the connectivity of $G$ is $\frac{\log(n)}n$. For $p>> \frac{\log(n)}n$, a random graph in $G(n,p)$ is with high probability connected, has, by Chernoff bound, maximum degree $d_{\max} = O(np)$,  and has $\lambda_1\sim pn$ by~\cite{Juhasz}. Thus, it follows from our results that $\gamma_{\mathrm{div}}(\Gamma) =\Theta(n)$ 

On the other hand, for $p < \frac{\log n}{n}$ the random graph $G\in G(n,p)$ is not necessarily connected. However, the threshold for the existence of a (unique) giant connected component in $G$ (i.e., of size linear in $n$) is $\frac 1n$. If in addition, we assume that $p>> \frac{1}{n}$, it follows from~\cite{Gao12} that the tree-width of a random graph in $G(n,p)$ is greater than $\beta n$ for some constant $\beta>0$, which in particular implies that a random Erd\"os-R\'enyi random graph with $p>> \frac 1n$ has divisorial gonality again $\Theta(n)$.  

\begin{cor}
The divisorial gonality of an Erd\"os-R\'enyi random graph in $G(n,p)$ is $\Theta(n)$ with probability tending to one as $n$ goes to infinity, provided that $pn >>1$.  
\end{cor}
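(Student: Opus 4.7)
The plan is to sandwich $\gamma_{\mathrm{div}}(\Gamma)$ between two linear bounds in $n$. The upper bound $\gamma_{\mathrm{div}}(\Gamma)\leq n$ was already noted at the opening of this section and follows from the fact that the vertex set of any simple graph model is rank-determining. So the task reduces to establishing a matching lower bound $\gamma_{\mathrm{div}}(\Gamma)\geq cn$ asymptotically almost surely, for some absolute $c>0$. I would split the hypothesis $pn\to\infty$ into two overlapping subranges and use a different tool in each.

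In the dense subrange $pn\gg\log n$, a random $G\in G(n,p)$ is a.a.s.\ connected, so Theorem~\ref{thm:spectdisc} applies directly, giving
\[
\gamma_{\mathrm{div}}(\Gamma)\;\geq\;\frac{n\,\lambda_1(G)}{24\,d_{\max}}.
\]
A standard Chernoff estimate on the binomial degrees shows $d_{\max}=O(np)$ with high probability, and Juh\'asz's theorem~\cite{Juhasz} yields $\lambda_1(G)=(1+o(1))pn$ a.a.s. Substituting both estimates into the displayed inequality gives $\gamma_{\mathrm{div}}(\Gamma)=\Omega(n)$ in this subrange.

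In the sparse subrange $1\ll pn\leq\log n$, the graph is typically disconnected and the quantitative spectral behaviour of $G$ itself becomes more delicate, so the spectral route through Theorem~\ref{thm:spectdisc} is less convenient. Instead I would use the tree-width route: by the result of Gao~\cite{Gao12}, the tree-width of $G$ (equivalently, of its giant component, which carries the relevant metric graph structure) is at least $\beta n$ for some absolute constant $\beta>0$ a.a.s. Combining Corollary~\ref{cor1} with Proposition~\ref{prop1} applied to the giant component then gives
\[
\gamma_{\mathrm{div}}(\Gamma)\;\geq\;wtw(G)\;\geq\;\tfrac{1}{2}\bigl(tw(G)+1\bigr)\;\geq\;\tfrac{\beta}{2}\,n.
\]

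The main obstacle is the sparse subrange: one must verify that Gao's tree-width lower bound holds under the relatively weak density assumption $pn\to\infty$ (rather than some stronger threshold), and one must interpret ``divisorial gonality of $G(n,p)$'' correctly when $G$ is disconnected, by working with the unique giant component as the underlying metric graph. Once both subranges are handled, an overlap at $pn\sim\log n$ makes the two arguments cover the full range, and the corollary follows.
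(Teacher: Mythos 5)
Your proposal is correct and follows essentially the same two-regime argument as the paper: in the dense range it combines Theorem~\ref{thm:spectdisc} with Chernoff bounds on $d_{\max}$ and Juh\'asz's estimate for $\lambda_1(G)$, and in the sparse range $1\ll pn\lesssim\log n$ it invokes Gao's linear tree-width lower bound together with Corollary~\ref{cor1} and Proposition~\ref{prop1}, restricting attention to the giant component. The upper bound via the rank-determining property of the vertex set is likewise the one the paper uses, so there is no substantive difference between your proof and the paper's.
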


It should be certainly possible to obtain sharper results. One might expect that when $pn$ is above a certain threshold, the divisorial gonality of a random graph in $G(n,p)$ is $(1-o(1))n$ with high probability.
  
  \medskip
  
Let $d\geq 3$ be an integer, and let $\epsilon>0$ be any small enough constant.  A random $d$-regular graph on $n$ vertices is  asymptotically almost surely connected, and by Friedman's theorem~\cite{Fri}, has $\lambda_1$ lower bounded by $d-2\sqrt{d-1}-\epsilon$. It follows that 
\begin{cor}
The divisorial gonality of a random $d$-regular graph is $\Theta(n)$ with probability tending to one as $n$ goes to infinity. 
\end{cor}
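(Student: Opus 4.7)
The proof will combine the spectral lower bound from Theorem~\ref{thm:spectdisc} with Friedman's spectral gap theorem and the standard $n$ upper bound on divisorial gonality recalled at the beginning of Section~\ref{sec:random}. The approach is essentially a direct substitution into previously established estimates; the main point is to verify that the relevant quantities are bounded in the correct direction with high probability.

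For the upper bound, I would invoke the observation made earlier in Section~\ref{sec:random}: for any connected simple graph $G$ on $n$ vertices, the divisor $\sum_{v\in V(G)}(v)$ has rank at least one (since $V(G)$ is a rank-determining set by \cite{Luo, HKN}), hence $\gamma_{\mathrm{div}}(G) \leq n$. A random $d$-regular graph with $d \geq 3$ is asymptotically almost surely connected, so this bound applies a.a.s.

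For the matching lower bound, I would apply Theorem~\ref{thm:spectdisc} to the random $d$-regular graph $G$ viewed as its own metric graph model (with any choice of edge lengths, since the underlying graph-theoretic bound only depends on $|V|$, $d_{\max}$, and $\lambda_1(G)$). Since $G$ is $d$-regular we have $d_{\max} = d$, and by Friedman's theorem~\cite{Fri}, for any sufficiently small $\epsilon > 0$, asymptotically almost surely $\lambda_1(G) \geq d - 2\sqrt{d-1} - \epsilon$. For $d \geq 3$, the quantity $d - 2\sqrt{d-1}$ is strictly positive (e.g.\ $\approx 0.17$ at $d=3$), so choosing $\epsilon$ small enough gives a positive constant $c_d := d - 2\sqrt{d-1} - \epsilon > 0$ depending only on $d$. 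Theorem~\ref{thm:spectdisc} then yields
\[\gamma_{\mathrm{div}}(G) \geq \frac{|V(G)|\lambda_1(G)}{24\, d_{\max}} \geq \frac{c_d}{24\, d}\, n\]
with probability tending to $1$, which is the desired linear lower bound.

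Combining the two bounds gives $\gamma_{\mathrm{div}}(G) = \Theta(n)$ a.a.s., completing the proof. There is no real obstacle here beyond assembling previously established ingredients; the only point requiring minor care is checking that Friedman's bound indeed produces a positive spectral gap for every fixed $d \geq 3$ (which fails for $d = 2$, where the cycle graph has vanishing gap as $n \to \infty$), and this is precisely why the statement is restricted to $d \geq 3$.
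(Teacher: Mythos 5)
Your proof is correct and follows the same route as the paper: connectivity a.a.s., Friedman's spectral gap theorem to lower bound $\lambda_1(G)$ by $d-2\sqrt{d-1}-\epsilon > 0$ for $d \geq 3$, Theorem~\ref{thm:spectdisc} for the linear lower bound, and the rank-determining-set observation for the $n$ upper bound. The observation about why $d=2$ fails is also the correct reason for the restriction.
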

It should be again possible to obtain better bounds (and convergence theorems) for the divisorial gonality of a random $d$-regular graph as a function of the degree $d$.

\subsection{Proof of Theorem~\ref{thm:divgon}} 
In this final section, we present the proof of our main theorem. 

Let $\Gamma$ be a metric graph and let $G$ be a simple graph model of $\Gamma$ with $\ell_{\min} (G) = \ell_{\min}(\Gamma)$.

\medskip

Rescaling $\Gamma$ by a factor of $\beta=\frac 1{\ell_{\min}}$, we get a simple graph model $G'$ 
of $\Gamma' = \beta\Gamma$ where each edge of $G'$ has length at least one. 
Note that  $\lambda_1(\beta\Gamma) = 1/\beta^2 \lambda_1(\Gamma)$, while $\mu(\beta \Gamma) = \beta \mu(\Gamma)$ and 
$\ell_{\min}(\beta\Gamma) = \beta \ell_{\min}(\Gamma)$, so that the quantity $\lambda_1(.) \mu(.) \ell_{\min}(.)$ is scale free for a metric graph. 
The divisorial gonality of a metric graph is also easily seen to be scale free, which means in proving the inequality of 
Theorem~\ref{thm:divgon}, rescaling $\Gamma$ with a factor of $\beta$ if necessary, 
we can assume that $\ell_{\min}(\Gamma)=1$, and the simple graph model $G$ of $\Gamma$ has minimum edge length equal to one.

\medskip

We now subdivide the simple graph model $G=(V, E)$ of $\Gamma$ in the following way. For any edge 
$e=\{u,v\}$ of $G$ of length $\ell(e)$, let $u_1$ and $u_2$ be the two points of $\Gamma$ on $e$ at distance 
$\frac 1{16\deg_{G}(u)}$ and $\frac 1{16\deg_{G}(v)}$ from $u$ and $v$, respectively. 
Consider a set of points $A_e$ in the interval 
$[u_1, v_1]$ on the edge $e$, including $u_1$ and $v_1$, such that the distance between any two points of 
$A_e$ in the interval is at least $\frac 14$. Taking $A_e$ of maximum size, we see that 
$4\ell(e)-1\leq |A_e| \leq 4\ell(e)+2\leq 6 \ell(e)$. 

Let $\overline G=(\overline V, \overline E)$ be the subdivision of $G$ at all the points in the union of $A_e$, 
for $e$ an edge of $G$. We see that 

$$|\overline V| \geq \sum_{e\in E}|A_e| \geq \sum_{e\in E} 4 \ell(e) -1 \geq \sum_{e\in E}3\ell(e) = 3\mu(\Gamma).$$

(Note that we also have
$$|\overline V| = |V| + \sum_{e\in E}|A_e| \leq 2|E|+\sum_{e\in E}|A_e| = \sum_{e\in E} |A_e|+2 \leq \sum_{e\in E} 4\ell(e)+4 \leq \sum_{e\in E} 8\ell_e = 8 \mu (\Gamma),$$
which together give  $3\mu(\Gamma)\leq|\overline V|\leq 8 \mu(\Gamma)$.)

\medskip

We now claim that
\begin{claim}\label{claim}
 There is a constant $c_1$ such that $\lambda_1(\Gamma)\leq c_1 
\lambda_1(\overline G)$. 
\end{claim}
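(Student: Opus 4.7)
The plan is to use the variational characterization
\[\lambda_1(\Gamma) = \inf_{f \in \mathcal{Z}_0(\Gamma)} \frac{\int_\Gamma |f'|^2\,dx}{\int_\Gamma f^2\,dx}\]
and exhibit a good test function on $\Gamma$ built out of an eigenfunction $g\colon \overline V \to \mathbb R$ of the discrete Laplacian on $\overline G$ for the eigenvalue $\lambda_1(\overline G)$, normalized so that $\sum_{v \in \overline V} g(v) = 0$; then $\sum_{e=uv \in \overline E}(g(u)-g(v))^2 = \lambda_1(\overline G)\sum_v g(v)^2$.

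The naive piecewise-linear interpolation of $g$ along the edges of $\overline G$ fails because of the short edges adjacent to an original vertex $v \in V$, whose length $1/(16\deg_G(v))$ would introduce a $\deg_G(v)$-factor in $\int_\Gamma |f'|^2$. To remedy this, I would define $f$ to be the constant $\alpha(v):=\frac{1}{\deg_G(v)}\sum_i g(u_1^{(i)})$ on the entire star neighborhood $B(v)$ consisting of all short edges around $v$ (so in particular $f(u_1^{(i)})=\alpha(v)$), and to be the piecewise-linear interpolation of $g$ on the middle edges; on the first and last middle edges (adjacent to a $u_1$ or a $v_1$), $f$ linearly interpolates between the star-value $\alpha$ at one end and the value $g$ at the other. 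With this choice, the short edges contribute nothing to $\int_\Gamma |f'|^2$.

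On the middle edges, whose lengths lie in $[1/4,1/2)$, the contribution is at most $4\sum_{e\text{ middle}}(f(u)-f(v))^2$. On the modified middle edges I would expand
\[(\alpha(v)-g(a_1))^2\le 2(\alpha(v)-g(u_1))^2 + 2(g(u_1)-g(a_1))^2,\]
and exploit that $\alpha(v)$ is the minimizer of $\sum_i (c - g(u_1^{(i)}))^2$ over $c$ to write
\[\sum_{v \in V}\sum_i (\alpha(v)-g(u_1^{(i)}))^2 \le \sum_{v \in V}\sum_i (g(v) - g(u_1^{(i)}))^2 \le \sum_{e \in \overline E}(g(u)-g(v))^2 = \lambda_1(\overline G)\sum_v g(v)^2,\]
so that in total $\int_\Gamma |f'|^2 \le C_1\,\lambda_1(\overline G)\sum_v g(v)^2$ for some absolute constant $C_1$. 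For the denominator, the inequality $\int_e f^2 \ge \frac{\ell(e)}{6}(f(u)^2+f(v)^2)$ over the middle edges (all of length $\ge 1/4$) together with the contribution $\mu(B(v))\alpha(v)^2=\alpha(v)^2/16$ on each star lets me lower bound $\int_\Gamma f^2$; using the discrete eigenfunction equation $g(v)(\deg_G(v) - \lambda_1(\overline G)) = \deg_G(v)\alpha(v)$ to control $g(v)^2$ for $v \in V$, and a similar relation for vertices of type $u_1$, in terms of sums of $g^2$ at interior subdivision points, I would obtain $\int_\Gamma f^2 \ge c_2 \sum_v g(v)^2$ for a universal constant $c_2$. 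Finally I subtract the mean $\bar f$ to land in $\mathcal{Z}_0(\Gamma)$; since $\sum_v g(v)=0$ and the edge lengths of $\overline G$ are uniformly bounded, the correction $\mu(\Gamma)\bar f^2$ is a small fraction of $\int_\Gamma f^2$.

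The main obstacle is ensuring no $d_{\max}$-factor sneaks into the estimate of $\int_\Gamma |f'|^2$ coming from the short edges. The key point is that flattening $f$ to the best constant approximation $\alpha(v)$ on the star $B(v)$ eliminates the short edges entirely from the Dirichlet integral and defers the discrepancy to a sum of differences that sits inside the full discrete Dirichlet form of $\overline G$, with only absolute constants incurred by Cauchy--Schwarz. A secondary technical point is handling the regime where $\lambda_1(\overline G)$ is not small compared to $\deg_G(v)$; this case can either be absorbed into the quantitative form of the eigenfunction relation or treated separately by noting that it gives a trivially useful lower bound on $\lambda_1(\overline G)$ against $\lambda_1(\Gamma)\le \pi^2$ (which holds after rescaling since $\ell_{\min}(\Gamma)=1$).
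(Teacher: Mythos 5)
Your central insight is the right one: a naive piecewise-linear extension of $g$ to $\Gamma$ would be penalized by the very short edges of length $1/(16\deg_G(v))$ near an original vertex $v\in V$, and flattening $f$ near those vertices removes that $d_{\max}$-factor from the Dirichlet integral. But your execution deviates from the paper's in a way that costs you. The paper places a constant ``plateau'' $f\equiv g(v)$ on a small ball $B(v)$ of volume exactly $1/16$ around \emph{every} vertex $v\in \overline V$ --- original vertices \emph{and} subdivision points alike --- so that $\Gamma\setminus\bigcup_v B(v)$ is a disjoint union of segments of length at least $1/8$, and $f$ is linear on each. This uniform construction makes both estimates immediate with no eigenfunction equation at all: the numerator is $\int |f'|^2\le 8\sum_{e\in \overline E}(g(u)-g(v))^2$, and for the denominator one writes $f-m = f_1+f_2$ where $f_1 = f\cdot\mathbf 1_{\bigcup B(v)}$; since $\int f_1 = \frac{1}{16}\sum_v g(v) = 0$, the cross-term $\int f_1f_2$ vanishes (because $f_2\equiv -m$ on $\bigcup B(v)$), giving $\int(f-m)^2\ge \int f_1^2 = \frac{1}{16}\sum_{v\in\overline V} g(v)^2$ directly. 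The constant $128$ drops out.

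You instead flatten only at $v\in V$ and choose the plateau value to be the neighbor-average $\alpha(v)$ rather than $g(v)$; this is where the complications come from, and your denominator estimate has genuine loose ends. Because $f$ takes value $\alpha(v)$ (not $g(v)$) on the star, and $f(u_1^{(i)})=\alpha(v)$ rather than $g(u_1^{(i)})$, the quantities $\sum_{v\in V}g(v)^2$ and $\sum_i g(u_1^{(i)})^2$ are not visible in $\int f^2$ without invoking the eigenfunction identity $g(v)=\frac{\deg(v)}{\deg(v)-\lambda_1(\overline G)}\alpha(v)$. That factor is uncontrolled when $\lambda_1(\overline G)$ approaches $\deg(v)$ (which includes the degree-$1$ and degree-$2$ vertices in $\overline G$, not only the high-degree ones), so you need the case split you mention, together with a separate absolute bound on $\lambda_1(\Gamma)$ after rescaling to $\ell_{\min}(\Gamma)=1$. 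None of this is carried out; the claim $\int_\Gamma f^2\ge c_2\sum_v g(v)^2$ is asserted, not proved. Also note that $\sum_v g(v)=0$ does \emph{not} give $\int_\Gamma f \approx 0$ for your $f$: the values on the stars are $\alpha(v)$, not $g(v)$, and the middle-edge lengths vary between $1/4$ and $1/2$, so $\mu(\Gamma)\bar f^2$ is not obviously a small fraction of $\int f^2$; this mean-correction step needs an actual estimate. The remedy is to abandon $\alpha(v)$, flatten at all of $\overline V$ with the value $g(v)$, and use the paper's orthogonality trick --- then $g$ need not be an eigenfunction, only a minimizer of the discrete Rayleigh quotient, and the whole argument closes in a few lines.
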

Here, $\lambda_1(\overline G)$ is the first non-trivial eigenvalue of the discrete Laplacian of $\overline G$. 
In the proof we will get $c_1 =128$, however we do not try to optimize the constant.

\medskip

Once this has been proved, applying Theorem~\ref{thm:spectdisc}, we get 
$$\gamma_{\div}(\Gamma) \geq \frac{|\overline V|\lambda_1(\overline G)}{24 d_{\max}} \geq \frac{3\mu(\Gamma)\lambda_1(\Gamma)}{24 c_1d_{\max}}.$$
Since we assume $\ell_{\min}(\Gamma) =1$, this leads to the proof of Theorem~\ref{thm:divgon} for the constant $C = \frac{1}{1024}.$

\medskip
We are thus left to prove the above claim.

\begin{proof}[Proof of Claim~\ref{claim}]
Recall that 
\[\lambda_1(\Gamma') = \inf_{f \in \mathrm{\Zh_0(\Gamma')}} \frac{\int_{\Gamma'} |f'|^2dx}{\int_{\Gamma'} f^2 dx}.\]

Recalling the variational characterization of $\lambda_1(\overline G)$, let 
$g: \overline V \rightarrow \mathbb R$ with $\sum_{v\in \overline V} g(v)=0$ and 
$$\lambda_1(\overline G) = \frac{\sum_{e=\{u,v\}\in \overline E}\,\, \bigl(g(u)-g(v)\bigr)^2}{\sum_{v\in \overline V}g(v)^2}.$$

For each vertex $v$ in $\overline V$ of degree $\deg(v)$, consider the disk $B(v)$ of radius 
$\frac{1}{16 \deg(v)}$ around $v$ in $\Gamma$. Note that $B(v)$ has volume $1/16$ for any vertex $v$ in $\overline G$. 

Note also that by the choice of $A_e$, $\Gamma \setminus \bigcup_{v\in \overline V}B(v)$ is a disjoint collection of 
segments of length at 
least $\frac 18$ (and at 
most $\frac 12$, by the maximality of each $A_e$). 

\medskip

Define the function $f:\Gamma \rightarrow \mathbb R$ as follows: first for any vertex $v\in \overline V$, 
define $f$ on the disk $B(v)$ to be the constant function taking value $g(v)$. Extend $f$ to whole of $\Gamma$ by linear interpolation on any segment of 
$\Gamma \setminus \bigcup_v B(v)$. Let $m = \frac 1{\mu(\Gamma)}\int_{\Gamma}f dx$ and consider the function 
$f-m$ which lies in $\mathrm{Zh}(\Gamma)$. We thus have 
\begin{equation}\label{eq0}
\lambda_1(\Gamma) \leq \frac{\int_{\Gamma} f'^2 dx}{\int_{\Gamma} (f-m)^2dx}.
\end{equation}

The function $f-m$ can be written as a sum $f_1+f_2$ where $f_1$ is the restriction of $f$ to $\bigcup_{v}B(v)$ extended by zero to whole of $\Gamma$, and $f_2 = f-m - f_1$.

\medskip

We have $\int_{\Gamma} f_1 dx= \sum_v \int_{B(v)} f_1 dx = \sum_v g(v) \mu(B(v)) = 1/16 \sum_v g(v) =0$,
and so $\int_{\Gamma} f_2 =0,$ as well.

In addition, since $f_2$ restricts to the constant function $-m$ on $\bigcup_v B(v)$, we have $\int_{\Gamma} f_1.f_2 =0$, 
which gives 

\begin{equation}\label{eq1}
 \int_{\Gamma} (f-m)^2 = \int_{\Gamma} f_1^2 + \int_{\Gamma} f_2^2 \geq \int_{\Gamma} f_1^2 = \sum_{v\in V} g(v)^2\mu(B(v)) = \frac 1{16} \sum_v g(v)^2.
\end{equation}

\medskip

We now  give an estimate of $\int_{\Gamma}f'^2$. Each connected component in $\Gamma\setminus \bigcup_v B(v)$ 
is a (unique) segment $I_e$
lying in the interior of an edge $e=\{u,v\}$ $\overline G$, and is adjacent to the two disks $B(u)$ and $B(v)$.

The function $f$ is linear affine of slope $\frac{g(u) - g(v)}{\ell(I)}$. Thus, we have

$$\int_{\Gamma} f'^2 =\sum_{e\in \overline E} \int_{I_e} f'^2 dx = 
\sum_{e=\{u,v\}\in \overline E} \frac{\bigl(g(u)-g(v)\bigr)^2}{\ell(I_e)}.$$

Given that the length of $I_e$ is at least $\frac 18$, we get 
\begin{equation}\label{eq2}
 \int_{\Gamma'}f'^2 dx \leq  8 \sum_{e=\{u,v\}\in \overline E} \bigl(g(u)-g(v)\bigr)^2.
\end{equation}

Equations~\eqref{eq0}, \eqref{eq1} and \eqref{eq2} together give
\[\lambda_1(\Gamma)\leq 128 \lambda_1(\overline G),\]
which is what we wanted to prove. 
\end{proof}

\begin{remark}\rm
 We refer to the paper of Cohen-Steiner and the first author~\cite{ACS} for a complement to Claim~\ref{claim}, and for an inequality in the other direction.
\end{remark}

\medskip

\thanks{{\it Acknowledgments}: O. A. likes to thank David Cohen-Steiner for his interest in the subject and for fruitful discussion and collaboration on related questions. Part of this research was conducted while O. A. and J. K. were visiting Max-Planck institute in mathematics in Bonn. They are grateful to the warm hospitality of the staff and excellent working conditions at the MPIM. }

\end{document}